  \definecolor{violet}{rgb}{ .615, .122, 1.0}%
\theoremstyle{plain}
\newtheorem*{Hypothesis}{Hypotheses} 
\newcommand*{\G}{\ensuremath{\mathbb{G}\text{r}}}
\newcommand*{\UG}{\ensuremath{\omega\text{-}\mathbb{G}\text{R}}}
\newcommand*{\OG}{\ensuremath{\omega\text{-}\G}}
\newcommand*{\C}{\ensuremath{\mathbb{C}AT}}
\newcommand*{\TC}{\ensuremath{\mathbb{T}\text{-}\mathbb{C}at}}
\newcommand*{\TG}{\ensuremath{\mathbb{T}\text{-}\mathbb{G}\text{r}}}
\newcommand*{\TGr}{\ensuremath{\mathbb{T}\text{-}\mathbb{G}\text{rr}}}
\newcommand*{\T}{\ensuremath{\mathbb{T}}}
\newcommand*{\R}{\ensuremath{\mathbb{R}}}
\newcommand*{\GC}{\ensuremath{\mathbb{G}\mathbb{C}AT}}
\newcommand*{\MGC}{\ensuremath{\mathbb{M}\mathbb{G}\mathbb{C}AT}}
\newcommand*{\AMGC}{\ensuremath{\mathbb{A}\mathbb{M}\mathbb{G}\mathbb{C}AT}}
\DeclareMathOperator{\Span}{Span}
\title{$\omega$-Operads of Coendomorphisms for Higher Structures}
\author{Camell Kachour}
\begin{document}
\maketitle
\begin{abstract}
It is well known that strict $\omega$-categories, strict $\omega$-functors, strict natural $\omega$-transformations, and so on, 
form a strict $\omega$-category. A similar property for weak $\omega$-categories is one of the main hypotheses  
in higher category theory in the globular setting. In this paper we show that there is a natural  globular $\omega$-operad
which acts on the globular set of weak $\omega$-categories, weak $\omega$-functors, weak natural $\omega$-transformations, and so on.
Thus to prove the hypothesis it remains to prove that this $\omega$-operad is contractible in Batanin's sense.    
To construct such an $\omega$-operad we introduce more general technology and suggest a definition of $\omega$-operad with the \textit{fractal property}.
If an $\omega$-operad $B^{0}_{P}$ has this property then one can define a globular set of all higher $B^{0}_{P}$-transformations and, moreover, this  
globular set has a $B^{0}_{P}$-algebra structure. 
\end{abstract}
 
\begin{minipage}{118mm}{\small
{\bf Keywords.} Higher categories; $\omega$-operads; Higher weak $\omega$-transformations.\\
{\bf Mathematics Subject Classification (2010).} 03B15, 03C85, 18A05, 18C20, 18D05, 18D50, 18G55, 55U35, 55U40.
}\end{minipage}

\tableofcontents
\vspace{1cm}

\section*{Introduction}

The algebraic model of weak higher transformations was undertaken for the first time in 
\cite{kamelkachour:defalg, kach:nscellsfinal} 
with respectively the \textit{Penon approach} and the \textit{Batanin approach}. However Andr{\'e} Joyal has pointed out to us 
that the $2$-coloured $\omega$-operads for the higher transformations that we built in \cite{kach:nscellsfinal} have two 
many coherence cells. In this paper we propose a new approach of contractibilty for coloured $\omega$-operads, which agrees
with contractibility in the sense of Batanin (\cite{bat:monglob}) for monochromatic $\omega$-operads, and which corrects the
counterexample that Andr{\'e} Joyal showed us.  

Overall, this paper is devoted to describing, up to a precise \textit{contractibility hypothesis} 
(see section \ref{Contractibility_Hypothesis}), 
the first operadic approach to the weak $\omega$-category of weak $\omega$-categories
Other approaches have been proposed: For example Michael Makkai in \cite{makkai} has described the weak $\omega$-category of weak 
$\omega$-categories by using a \textit{multitopic} approach, and a \textit{simplicial} approach of the weak 
$\omega$-category of weak $\omega$-categories is describe by Jacob Lurie in 
\cite{LurieTopos}, in the context of the $(\infty,1)$-categories. We believe that our operadic approach
 has the definitive advantage of being very explicit compared to other approaches (see also 
 \cite{JoyalTierney, RezkCartesian,GarnerHomoCat, SimpsonHomoCat}). . 
Such a higher category theory can then be seen 
as the right generalisation of strict $\omega$-category theory, where strict $\omega$-categories,
strict $\omega$-functors, and all higher strict transformations organised themselves into a strict
$\omega$-category. This strict $\omega$-category can be seen  as a sort \textit{fractal object}, in
the sense that its objects are themselves strict $\omega$-categories. 
In \cite{batanin02:_penon_method_of_weaken_algeb_struc} the author has shown that weak $\omega$-categories of Penon are algebras of 
an $\omega$-operad, the \textit{$\omega$-operad of Penon}, and thus by analogy, it should be possible to describe higher transformations in 
\cite{kamelkachour:defalg} as algebras for specifics coloured $\omega$-operads. So this article can be used also as a way to see how to build the 
weak $\omega$-category of weak $\omega$-categories, with the Penon approach to weak $\omega$-categories. Our techniques also work very well for Leinster's version of weak $\omega$-categories which is a slight modification of the original Batanin's approach.

The main technical difficulty was to find the most natural way to build this algebraic model of the weak 
$\omega$-category of 
the weak $\omega$-categories. We believe that 
the way we describe here, up to the \textit{contractibilty hypothesis} in the section 
\ref{Contractibility_Hypothesis}, is the achievement of this goal. The direction that we propose is not only very natural but also it allows us to see quickly how to build it 
(see the section \ref{The_Weak_omega_category_of_the_Weak_omega_categories}). 

More precisely, starting from the coglobular complex of colored $\omega$-operads $B^{\bullet}_{C}$ built in \cite{kach:nscellsfinal} (where $B^{0}_{C}$ is the Batanin's $\omega$-operad for weak $\omega$-categories) we construct its $\omega$-operad of coendomorphisms $Coend(B^{\bullet}_{C})$
(which we called the \textit{violet operad} for reason we explain later) and show that this $\omega$-operad acts on the globular set of weak $\omega$-categories, 
weak $\omega$-functors, etc. We conjecture that the \textit{violet operad}  is contractible (see \ref{Contractibility_Hypothesis}). This conjecture implies immediately  that weak $\omega$-categories, weak $\omega$-functors etc. form  a weak $\omega$-category. We provide some evidence that our contractibility hypothesis is correct but a full proof of  it requires a development of a homotopy theory (and in particular a theory of homotopy colimits) of colored $\omega$-operads. This will be the subject of our future work.

Contractibility is a specific structure
of the $\omega$-operad $B^{0}_{C}$ of Batanin, but if we conceptualise this property and the technology which allows us to see the way
the weak $\omega$-category of weak $\omega$-categories is built, we can describe many kinds of \textit{higher structures} with similar 
\textit{fractal phenomena}, by using the same technology of $\omega$-operads of coendomorphisms. 
Let us be more precise: We start with a basic data, which is a coglobular complex of 
$\omega$-operads $B^{\bullet}_{P}$ in $\TC_c$ equipped with a structure $P$, where $\TC_c$ is the category of $\T$-categories over 
constant $\omega$-graphs, or in other words, the category of coloured $\omega$-operad over constant $\omega$-graphs 
(see section \ref{T-Cat} and section \ref{Some_standard_diagrams_of_the_omega-transformations}). 
We say that the first $\omega$-operad $B^{0}_{P}$ (the "$0$-step") of this coglobular complex $B^{\bullet}_{P}$ has the \textit{fractal property}, 
if there is a morphism of $\omega$-operads between $B^{0}_{P}$ and the corresponding $\omega$-operad of coendomorphisms $Coend(B^{\bullet}_{P})$  
associated to $B^{\bullet}_{P}$. If it happens then all algebras for all $\omega$-operads $B^{n}_{P}$ ($n\in\mathbb{N}$) organise into a single algebra of 
$B^{0}_{P}$. If $P=S_{u}$, where $P$ means \textit{strictly contractible with contractible units} (see the section \ref{Weak_contractibility_and_strict_contractibility} 
and the section \ref{The_Strict_omega_category_of_the_Strict_omega_categories}), then we obtain the indigo operad 
$Coend(B^{\bullet}_{S_{u}})$, and up to a precise \textit{contractibility hypothethis} (see section \ref{Contractibility_Hypothesis}), 
we can describe the strict $\omega$-category of the strict $\omega$-categories with the same technology as we do for the weak case. 
In this article we also build two other coglobular complex $B^{\bullet}_{P}$ of $\omega$-operad which corresponding 
$\omega$-operad $B^{0}_{P}$ have the fractal property, without requiring any hypotheses 
(see section \ref{Examples_of_Standard_Actions}). 

The main ideas of this article were exposed for the first time in September 2010, in the Australian Category Seminar at Macquarie University
 \cite{kach3:redmacq}.

The plan of this article is as follow :

In the first section (see \ref{Batanin_theory_of_omega_Operads}), we summarise Batanin's theory of 
$\omega$-operads (see \cite{bat:monglob})
with the goal to extract the corollary (see \ref{le-corollaire}) which is a central result for our article, and this corollary is just a consequence of proposition 7.2 in \cite{bat:monglob}. A lot of material which surrounds the corollary \ref{le-corollaire} is described in \cite{bat:monglob}: Globular categories, globular functors, 
 monoidal globular categories (called $MG$-categories), monoidal globular functors (called $MG$-functors), augmented monoidal globular categories 
 (called $AMG$-categories), globular objects of a globular category, etc. However we expose these concepts in a more modern approach, which 
 essentially follows the work of \cite{weber:pseudo}. Then we explain in detail the two most important
 $MG$-categories for Batanin's theory of $\omega$-operads: The $MG$-category $\T\text{ree}$ of trees and the $MG$-category 
 $\mathbb{S}pan$ of spans in $Set$ (see \ref{Main_examples_of_Globular_Monoidal_Categories}), which are also described with a modern approach in the works \cite{batross:multi,bat:eckmann,street-petit-topos,mark-topos}. In \ref{T-Cat} we briefly describe $\T$-categories, where 
 $\T$ is the monad of the strict $\omega$-categories on $\omega$-graphs. $\T$-categories are important for this article because
 for us an $\omega$-operad in the sense of Batanin is a $\T$-category over the terminal $\omega$-graph (see the section 
 \ref{omega-Operads_of_Endomorphism_and_Coendomorphism}).  
  
In the second section (see \ref{Some_standard_diagrams_of_the_omega-transformations}) we state the main result of the article :
 By using the corollary \ref{le-corollaire} of the previous section, for each coglobular object $W^{\bullet}$ in $\TC_c$ we associate
 its \textit{standard action in $\TC_1$} which roughly speaking is a diagram in $\TC_1$ built with two morphisms of $\omega$-operads.
 In particular each coglobular object $W^{\bullet}$ shows us two important $\omega$-operads : The $\omega$-operad $W^{0}$ (the 
 "$0$"-step of the coglobular complex $W^{\bullet}$), and the associated $\omega$-operad of coendomorphism 
 $Coend(W):=(HOM(W^{n},W^{t}))_{n\in \mathbb{N}, t\in\mathbb{T}ree}$. The $\omega$-operad $W^{0}$ is \textit{fractal}
 if we can build a morphism of $\omega$-operads between it and $Coend(W)$. 
 Then we give the application of these technology to the coglobular complex $C^{\bullet}$ in the category $\TG_{p,c}$ of pointed $\T$-graphs 
 over constant $\omega$-graphs. For example denote by $B^{0}_C$ the $\omega$-operad of Batanin for weak $\omega$-categories, and by
 $Coend(B^{\bullet}_C)$ the $\omega$-operad of coendomorphisms of the coglobular complex $B^{\bullet}_C$ in $\TC_c$ freely 
 generated by $C^{\bullet}$. If $B^{\bullet}_C$ is fractal then there is an action of it on the globular complex in $SET$ of the weak
 higher transformations, which show that the weak $\omega$-category of weak $\omega$-categories exists in a completely
 $\omega$-operadic setting. It shows also that it is a weak $\omega$-category in the sense of Batanin. 
 
The third section is devoted first to describing the coglobular complex $B^{\bullet}_{S_{u}}$ in $\TC_c$ of  strict higher
transformations, and the coglobular complex $B^{\bullet}_{C}$ in $\TC_c$ of weak higher transformations. In particular
we propose a new approach to contractibility which corrects a counterexample that Andr{\'e} Joyal constructed to
our first approach to contractibility as in the article \cite{kach:nscellsfinal} for weak higher transformations. The key
points of this new approach is to bring to light some remarkable cells that we call \textit{root cells}, and to take account of a specific
property, the \textit{loop property}, that these \textit{root cells} must follow for contractibility. Then we state the following
hypothesis : For each tree $t$, the coloured $\omega$-operad $B^{t}_{S_{u}}$ is strictly contractible and has contractible units,
and the coloured $\omega$-operad $B^{t}_{C}$ is contractible. If we accept this hypothesis then it is possible to build a composition system
for each $\omega$-operad of coendomorphism $Coend(B^{\bullet}_{S_{u}})$ and $Coend(B^{\bullet}_C)$, and also to show that the $\omega$-operads
 $B^{0}_{S_{u}}$ and $B^{0}_{C}$ are fractal. It thus show that the strict $\omega$-category of strict 
 $\omega$-categories exists in a completely
 $\omega$-operadic setting, the weak $\omega$-category of weak $\omega$-categories exists in a completely $\omega$-operadic setting,
 and this facts are proved by using the same technology related to the \textit{standard action in $\TC_1$}.
 
The fourth section gives two examples of $\omega$-operads having the fractal property : It is easy to show that the $\omega$-operad
$B^{0}_{Id}$ of $\omega$-magmas and the $\omega$-operad $B^{0}_{Id_{u}}$ of reflexive $\omega$-magmas, 
both have the fractal property. This proves the existence of the $\omega$-magma of $\omega$-magmas, and the reflexive $\omega$-magma of 
reflexive $\omega$-magmas, by using the same technology related to the \textit{standard action in $\TC_1$}.

{\bf Acknowledgement.} 
I am grateful to Michael Batanin and to Ross Street for their mathematical support and encouragement. I am grateful to
Steve Lack, Mark Weber, who explained some technical points that I was not able to understand by myself. I am 
grateful to Andr{\'e} Joyal who helped me to improve my $\omega$-operadic approach to weak higher transformations.
I am also grateful to Richard Garner who shared with me his point of view on the small argument object, and to Cl{\'e}mens Berger, 
Denis-Charles Cisinski, and R{\'e}my Tuy{\'e}ras for many discussions about abstract homotopy theory. Finally I am grateful to some of my friends who support me many times with skype: Samir Berrichi, 
Bushr\^{a} Belguellil, Abdou Azzouz, Nizar Slimani, and Jean-Pierre Ledru, and other friends who support 
me in Australia: Chris McMillan, Estelle Helene Borrey, Edwin Nelson, and Omar Kali.

I dedicate this work to Michael Batanin.

  \vspace*{1cm}   

  \section{Batanin's theory of $\omega$-Operads}
  \label{Batanin_theory_of_omega_Operads}
  
 Thoughout this paper, if $\mathbb{C}$ is a category then $\mathbb{C}(0)$ is the class of its
objects (but we often omit "$0$" when there is no confusion) and $\mathbb{C}(1)$ is the class of its morphisms. The symbol $:=$ means
"by definition is". Also $Set$ denotes the category of sets, and $SET$ denotes the category of large sets (for instance the proper class
of ordinals is an object of $SET$, but not in $Set$). Similarly $\mathbb{C}at$ denotes the $2$-category
 of small categories, and $\C$ denotes the $2$-category of categories. 
  
  The theory of $\omega$-operads was developed for the first time by Michael Batanin in his seminal article \cite{bat:monglob}. More precisely,
  he produced a theory of $\omega$-operads in the general context of his monoidal globular categories. 
       
  In this chapter we summarise the general approach of the theory of $\omega$-operads of Michael Batanin, 
  because it is in this general approach that the important corollary \ref{le-corollaire} was formulated. This corollary is the key result to developing 
  the main technology of this article : It is a result about the existence of the $\omega$-operad of coendomorphisms which, as we will see, plays
  an important role for many kinds of \textit{higher structure}. A \textit{higher structure} means for us a structure based on $\omega$-graphs. For instance, 
  $\omega$-magmas are basic example of such higher structure, but we will consider also reflexive 
  $\omega$-magmas as an other kind of higher structure, and 
  also other more complex higher structures like the weak $\omega$-categories.
        
 \subsection{$MG$-categories and $AMG$-categories}
 \label{MG-categories}
 A lot of material which surrounds corollary \ref{le-corollaire} is described in \cite{bat:monglob}: Globular categories, globular functors, 
 monoidal globular categories (called $MG$-categories), monoidal globular functors (called $MG$-functors), augmented monoidal globular categories 
 (called $AMG$-categories), globular objects of a globular category, etc. 

   \begin{definition}  
   \label{def-glob-category}
 The globe category $\mathbb{G}$ is defined as following: For each $n\in\mathbb{N}$, objects of $\mathbb{G}$ are formal objects $\bar{n}$. Morphisms of $\mathbb{G}$ 
  are generated by the (formal) cosource and cotarget $\xymatrix{\bar{n}\ar[r]<+2pt>^{s^{n+1}_{n}}\ar[r]<-2pt>_{t^{n+1}_{n}}&\overline{n+1}}$ 
  such that we have the relations $s^{n+1}_{n}s^{n}_{n-1}=s^{n+1}_{n}t^{n}_{n-1}$ and $t^{n+1}_{n}t^{n}_{n-1}=t^{n+1}_{n}s^{n}_{n-1}$. 
  For each $0\leqslant p<n$, we put $s^{n}_{p}:=s^{n}_{n-1}\circ s^{n-1}_{n-2}\circ ... \circ s^{p+1}_{p}$ and 
  $t^{n}_{p}:=t^{n}_{n-1}\circ t^{n-1}_{n-2}\circ ... \circ t^{p+1}_{p}$.  
 \end{definition} 
 \begin{definition}
 Starting with the globe category $\mathbb{G}$ above, 
 we build the reflexive globe category $\mathbb{G}_{\text{r}}$ as follow : For each $n\in\mathbb{N}$ we add in $\mathbb{G}$ the formal morphism
 $\xymatrix{\overline{n+1}\ar[rr]^{1^{n}_{n+1}}&&\bar{n}}$ such that 
 $1^{n}_{n+1}\circ s^{n+1}_{n}=1^{n}_{n+1}\circ t^{n+1}_{n}=1_{\bar{n}}$. For each $0\leqslant p<n$, we put
 $1^{p}_{n}:=1^{p}_{p+1}\circ 1^{p+1}_{p+2}\circ ... \circ 1^{n-1}_{n}$. 
 \end{definition} 
    
 The category of $\omega$-graphs is the category of presheaves $\OG:=[\mathbb{G}^{op};Set]$ (which is also called the category of globular sets in the 
 literature; see for example \cite{bat:monglob}), the category of large $\omega$-graphs is the category of presheaves 
 $\UG:=[\mathbb{G}^{op};SET]$, and the $2$-category of globular categories is the $2$-category of prestacks $\GC:=[\mathbb{G}^{op};\C]$.
  
\begin{definition}
Consider the terminal globular category $1$ and a globular category $\mathcal{C}$.
 A globular object $(W,\mathcal{C})$ in
 $\mathcal{C}$ is a morphism $\xymatrix{1\ar[r]^{W}&\mathcal{C}}$ in $\GC$.
\end{definition}
 
 Let us put $\OG\text{r}:=[\mathbb{G}^{op}_{\text{r}};Set]$, the category of the reflexive $\omega$-graphs (see \cite{penon1999}). We have the adjunction
  \[\xymatrix{U\dashv R: \OG\text{r}\ar[rr]&&\OG}\]
 and we call $(\R,\eta,\mu)$ the generated monad whose algebras are reflexive $\omega$-graphs.
 Objects of $\OG\text{r}$ are usually denoted by $(G,(1^{p}_{n})_{0\leqslant p<n}))$, where the operations 
 $(1^{p}_{n})_{0\leqslant p<n}$ form a chosen reflexive structure on the $\omega$-graph $G$.  
     
  Let us denote by $\omega\text{-}\mathbb{C}at$ the category of strict $\omega$-categories. The forgetful functor 
  $\xymatrix{\omega\text{-}\mathbb{C}at\ar[r]^{U}&\OG}$, which associates to any strict $\omega$-category $C$
 its underlying $\omega$-graph $U(C)$, is monadic. The corresponding adjunction generates a cartesian monad $\T$
  which is the monad of strict $\omega$-categories on $\omega$-graphs.

 Consider $\mathbb{C}AT_{Pull}$, the $2$-category of categories with pullbacks, with morphisms functors which preserve 
 these pullbacks, and with $2$-cells natural transformations between these functors. The functor $\mathbb{C}at(-)$ which 
 associates to any object $C$ in $\mathbb{C}AT_{Pull}$ the $2$-category $\mathbb{C}at(C)$ of internal categories in it, is 
 a $2$-functor
   \[\xymatrix{\mathbb{C}AT_{Pull}\ar[rr]^{\mathbb{C}at(-)}&&2\text{-}\mathbb{C}AT}\]
 where here $2\text{-}\mathbb{C}AT$ denote the $2$-category of $2$-categories. Thus for the case of the monad 
 $\T$ on $\OG$ we can associate the $2$-monad $\mathcal{T}=\mathbb{C}at(\T)$ on $\GC$.
  
 \begin{definition}[\cite{weber:pseudo}]
 An $MG$-category is a normal pseudo $\mathcal{T}$-algebra for the 
 $2$-monad $\mathcal{T}$ on $\GC$, $MG$-functors are
 strong $\mathcal{T}$-morphisms, and $MG$-natural transformations are algebra $2$-cells of $\mathcal{T}$.
 These data form the $2$-category $\MGC$ of $MG$-categories. 
 \end{definition} 
 
There is a coherence result in \cite{bat:monglob} that any $MG$-category is equivalent to a strict $MG$-category (a strict $MG$-category
 is just an internal strict $\omega$-category in $\C$). Because of this coherence theorem we will not mention explicitly the coherence
 isomorphisms in the $MG$-categories which can be found in \cite{bat:monglob}. 
 Also, the $2$-category $\MGC$ has a cartesian monoidal structure, which allows us to make the following definition

 \begin{definition}[\cite{weber:pseudo}]
An $AMG$-category is a pseudo monoid in $\MGC$. An $AMG$-functor is an $MG$-functor 
$f : A \longrightarrow A^{\prime}$ equipped with a
strong monoidal structure. An $AMG$-natural transformation $\phi : f \Longrightarrow f^{\prime}$ is an $MG$-natural transformation such that
$\phi$ is a monoidal $2$-cell. These datas form the $2$-category $\AMGC$ of the $AMG$-categories.
\end{definition}

\subsection{Main examples of Monoidal Globular Categories}
    \label{Main_examples_of_Globular_Monoidal_Categories}
Globular categories can be defined also as internal categories in $\OG$ because of the canonical isomorphism
 $\mathbb{C}at(\OG)\simeq [\mathbb{G}^{op};\mathbb{C}at]$. We will use this presentation to define the strict
 $MG$-category of $n$-trees as a discrete internal category
 \[\xymatrix{\T(1)\ar[rrr]&&&\T(1)\ar[lll]<+3pt>\ar[lll]<-3pt>\ar[rrr]<-3pt>\ar[rrr]<+3pt>&&&\T(1)\ar[lll]<+6pt>\ar[lll]\ar[lll]<-6pt>}\]
  This $MG$-category $\T\text{ree}$ has a canonical globular object given by the unit of $\T$ : 
  $\xymatrix{1\ar[r]^{}&\T\text{ree}},\xymatrix{1(n)\ar@{|->}[r]&1(n)}$, where here $1$ denote the terminal globular categories,
  where here $1(n)$\footnote{which is denoted $U_{n}$ in \cite{bat:monglob}} denotes the $n$-linear tree. 
  It is shown in \cite{batross:multi,bat:eckmann} that it has the following universal 
  property : If $\mathcal{C}$ is an $MG$-category and $(\mathcal{C},W)$ is a globular object in it, then there is 
 a unique, up to isomorphism, $MG$-functor $W(-)$ which makes commutative the following triangle
 \[\xymatrix{\T\text{ree}\ar@{.>}[rrdd]^{W(-)}\\\\
 1\ar[uu]^{}\ar[rr]_{W}&&\mathcal{C}}\] 
  
Let us set up the following notation : Tensors of the monoidal globular category of $n$-trees are denoted by symbols $\star^{n}_{p}$
 \[\xymatrix{\star^{n}_{p} : \T\text{ree}_n\underset{\T\text{ree}_p}\times \T\text{ree}_n\ar[rr]&&\T\text{ree}_n}\]
   Also an $n$-tree $t$ can be degenerate if it is of the form $t=Z^{k}_{n}(t')$ where $t'$ is a $k$-tree such that $0\leqslant k<n$. 
In \cite{bat:monglob} the author used the letter "Z" to express the reflexivity of an $MG$-category, but we prefer use the notation "1" to express these 
reflexivities for the specific case of $n$-trees, to emphasis that a degenerate tree $t=1^{k}_{n}(t')$ is also an $n$-cell of the strict $\omega$-category 
$\T(1)$. For example, for the $n$-linear tree $1(n)$, the $(n+1)$-tree $t=1^{n}_{n+1}(1(n))$ of $\T(1)$ is degenerate.
 
 Each $n$-tree $t$ has a unique decomposition
 $$1^{k_{1}}_{i_{1}}(1(k_{1}))\star^{sup(i_{1},i_{2})}_{i_{1}'}1^{k_{2}}_{i_{2}}(1(k_{2}))
 \star^{sup(i_{2},i_{3})}_{i_{2}'} ... \star^{sup(i_{m-1},i_{m})}_{i_{m-1}'}1^{k_{m}}_{i_{m}}(1(k_{m}))$$
  where for each $1\leqslant j\leqslant m-1$, we have $i'_{j}<k_{j+1}\leqslant i_{j+1}$ and $i'_{j}<k_{j}\leqslant i_{j}$, and if $k_{j}=i_{j}$ by convention we put 
  $1^{k_{j}}_{i_{j}}(1(k_{j}))=1(k_{j})$. From this unique decomposition, the $n$-tree $t$ can be represented
  by the matrix of numbers 
  \[ \left( \begin{array}{ccccccccc}
i_{1} &  & i_{2} & . & . & . & i_{m-1} & & i_{m}\\
  & i'_{1} & . & . & . & . & . & i'_{m-1} & \end{array} \right)\]    
   which we call the \textit{Grothendieck notation} for the $n$-tree $t$ 
   (see \cite{grothendieck83:_pursuin_stack,malts:group,Arathese}). 
Many authors gave their own approach to $n$-trees (see for instance \cite{street-petit-topos,joyal:theta,bat:monglob,lein1:oper,Ber00,Cisinsk:Bat}), 
and all these approaches are equivalent. 
 
The second class of important examples of $MG$-category is given by the \textit{Span and Cospan construction}. For each $n\in \mathbb{N}$, consider
 the following formal partialy ordered set $Oct(n)$
 
\[\xymatrix{&\bullet^{}_{n}\ar[ld]\ar[rd]\\
  \bullet^{+}_{n-1}\ar[d]\ar[rrd]&&\bullet^{-}_{n-1}\ar[d]\ar[lld]\\
  \bullet^{+}_{n-2}\ar@{.>}[d]&&\bullet^{-}_{n-2}\ar@{.>}[d]\\   
  \bullet^{+}_{1}\ar[d]\ar[rrd]&&\bullet^{-}_{1}\ar[d]\ar[lld]\\ 
  \bullet^{+}_{0}&&\bullet^{-}_{0}}\]
  
Let $Oct^{+}(n-1)$ be the poset obtained from $Oct(n)$ by removing $\bullet^{-}_{n-1}$ and $\bullet^{}_{n}$, and similarly 
Let $Oct^{-}(n-1)$ be the poset obtained from $Oct(n)$ by removing $\bullet^{+}_{n-1}$ and $\bullet^{}_{n}$

  \[\xymatrix{&\bullet^{+}_{n-1}\ar[ld]\ar[rd]\\
  \bullet^{+}_{n-2}\ar[d]\ar[rrd]&&\bullet^{-}_{n-2}\ar[d]\ar[lld]\\
  \bullet^{+}_{n-3}\ar@{.>}[d]&&\bullet^{-}_{n-3}\ar@{.>}[d]\\   
  \bullet^{+}_{1}\ar[d]\ar[rrd]&&\bullet^{-}_{1}\ar[d]\ar[lld]\\ 
  \bullet^{+}_{0}&&A^{-}_{0}}\qquad \xymatrix{&\bullet^{-}_{n-1}\ar[ld]\ar[rd]\\
  \bullet^{+}_{n-2}\ar[d]\ar[rrd]&&\bullet^{-}_{n-2}\ar[d]\ar[lld]\\
  \bullet^{+}_{n-3}\ar@{.>}[d]&&\bullet^{-}_{n-3}\ar@{.>}[d]\\   
  \bullet^{+}_{1}\ar[d]\ar[rrd]&&\bullet^{-}_{1}\ar[d]\ar[lld]\\ 
  \bullet^{+}_{0}&&\bullet^{-}_{0}}\] 

We obtain the following diagram in $\mathbb{C}at$
\[\xymatrix{&Oct^{+}(n-1)\ar[rd]^{i^{+}_{n}}\\
  Oct^{}(n-1)\ar[ru]^{e^{+}_{n}}\ar[rd]_{e^{-}_{n}}&&Oct(n)\\
  &Oct^{-}(n-1)\ar[ru]_{i^{-}_{n}}}\]
such that functors $i^{+}_{n}, \, i^{-}_{n}$ are just canonical inclusions, and the functors $e^{+}_{n}$ and 
 $e^{-}_{n}$ are obvious isomorphisms. Put $s^{n}_{n-1}=i^{+}_{n}\circ e^{+}_{n}$ and $t^{n}_{n-1}=i^{-}_{n}\circ e^{-}_{n}$. The family
 of functors $\xymatrix{Oct(n-1)\ar[r]<+2pt>^(.6){s^{n}_{n-1}}\ar[r]<-2pt>_(.6){t^{n}_{n-1}}&Oct(n)}$ ($n\geqslant 1$), defines an object of 
 $\GC$.
 
 Furthermore, 
 for any category $C\in\mathbb{C}AT$, let us call the category of $n$-spans in $C$ the following category of presheaves in $C$:
$Span_{n}(C):=[Oct(n);C]$. The previous functors $s^{n}_{n-1}$ and $t^{n}_{n-1}$ induce a family
 of functors $\xymatrix{Span_{n}(C)\ar[r]<+2pt>^{s^{n}_{n-1}}\ar[r]<-2pt>_{t^{n}_{n-1}}&Span_{n-1}(C)}$ 
 (that we still note by $s^{n}_{n-1}$ and $t^{n}_{n-1}$ because there is no risk of confusion), which defines an object 
 of $\mathbb{G}\mathbb{C}AT$. Dually for any category $C\in\mathbb{C}AT$, let us call the category of presheaves
$Cospan_{n}(C):=[Oct(n)^{op};C]$, the category of $n$-cospans in $C$. The functors $s^{n}_{n-1}$ and $t^{n}_{n-1}$ between the $Oct(n)$, also induce a family
 of functors $\xymatrix{Cospan_{n}(C)\ar[r]<+2pt>^{s^{n}_{n-1}}\ar[r]<-2pt>_{t^{n}_{n-1}}&Cospan_{n-1}(C)}$, 
 which is still an object of $\mathbb{G}\mathbb{C}AT$. These two constructions are functorial and define the Span and Cospan constructions 
 
    \[\xymatrix{\mathbb{C}AT\ar[rr]<+2pt>^{Span}\ar[rr]<-2pt>_{Cospan}&&\mathbb{G}\mathbb{C}AT}.\]
 
  The case of a category $C$ with pullbacks is more interesting for the span construction, because the corresponding globular category $Span(C)$
  is canonically equipped with an $MG$-structure. We have a dual result for categories with pushouts and their cospans (see example 7 of section 3 in \cite{bat:monglob}). 
  For example consider a category $C$ with pushouts and the two $2$-cospans $x$ and $y$ in $C$ ($x$ is the diagram on the left).
  
    \[\xymatrix{&A_{2}\\
  A^{+}_{1}\ar[ru]&&A^{-}_{1}\ar[lu]\\
  A^{+}_{0}\ar[u]\ar[rru]&&A^{-}_{0}\ar[u]\ar[llu]}   
  \qquad \xymatrix{&B_{2}\\
  B^{+}_{1}\ar[ru]&&A^{+}_{1}\ar[lu]\\
  A^{+}_{0}\ar[u]\ar[rru]&&A^{-}_{0}\ar[u]\ar[llu]}\] 
The $1$-cospans $s^{2}_{1}(x)$ and $t^{2}_{1}(y)$ are equal to the following $1$-cospan
               \[\xymatrix{&A^{+}_{1}\\
  A^{+}_{0}\ar[ru]&&A^{-}_{0}\ar[lu]}\]
and $x\otimes^{2}_{1}y$ is given by the following $2$-cospan in $C$.   
        \[\xymatrix{&A_{2}\underset{A^{+}_{1}}\sqcup_{} B_{2}\\
  A^{+}_{1}\ar[ru]&&B^{+}_{1}\ar[lu]\\
  A^{+}_{0}\ar[u]\ar[rru]&&A^{-}_{0}\ar[u]\ar[llu]}\]
 
  Consider $\mathbb{C}AT_{Push}$ the
  category of categories with pushouts and with morphisms functors which preserve these pushouts. Dually consider the underlying category of the $2$-category
  $\mathbb{C}AT_{Pull}$ that we have introduced the section \ref{MG-categories}. We have the following diagram 
  
  \[\xymatrix{\mathbb{C}AT_{Push}\ar[dd]_{(.)^{op}}\ar[rd]^{Cospan}&\\
   &\mathbb{G}\mathbb{M}\mathbb{C}AT  \\
   \mathbb{C}AT_{Pull} \ar[ru]_{Span}   
         }\]   
where $(.)^{op}$ is the basic isomorphism of categories coming from duality and where
    the functors $Cospan$ and $Span$ are easily defined 
    on morphisms by construction, just because functors preserving pushouts gives $MG$-functors
    between their categories of cospans. 
    
\begin{remark}
If $\mathbb{C}AT^{*}_{Pull}$ denotes the category of categories with pullbacks and initial objects, and morphisms functors which preserve pullbacks and initial objects, and 
$\mathbb{C}AT^{*}_{Push}$ denotes the category of categories with pushouts and initial objects, and morphisms functors which preserve pushouts and initial objects, then
we have the following constructions
\[\xymatrix{\mathbb{C}AT^{*}_{Push}\ar[dd]_{(.)^{op}}\ar[rd]^{Cospan}&\\
   &\mathbb{A}\mathbb{M}\mathbb{G}\mathbb{C}AT  \\
   \mathbb{C}AT^{*}_{Pull} \ar[ru]_{Span}   
         }\]   
\end{remark}

Now consider a category $C$ with pushouts and a globular object $(C,W)$ in $Cospan(C)$, which is also a coglobular object in $C$. 
Thanks to the universality of the map $\xymatrix{1\ar[r]^{}&\T\text{ree}}$ above there exist a unique map 
 \[\xymatrix{W(-):\T\text{ree}\ar[rr]&&Cospan(C)}\]
This map $W(-)$ sends each $n$-tree $t$ to a $n$-coglobular object in $C$ :

\[\xymatrix{W(t)=(W^{0}\ar[r]<+2pt>^{\delta^{1}_{0}}\ar[r]<-2pt>_{\kappa^{1}_{0}}
  &W^{\partial^{n-1} t}\ar[r]<+2pt>^{\delta^{1}_{2}}\ar[r]<-2pt>_{\kappa^{2}_{1}}
  &W^{\partial^{n-2} t}\ar@{.>}[r]<+2pt>^{}\ar@{.>}[r]<-2pt>_{}
  &W^{\partial t}\ar[r]<+2pt>^{\delta^{n}_{n-1}}\ar[r]<-2pt>_{\kappa^{n}_{n-1}}
  &W^{t}),}\] 
 where the $\partial^{k} t$ denotes the trunction of the $n$-tree $t$ in the level $k$ ($1\leqslant k\leqslant n-1$). In this $n$-coglobular object $W(t)$, 
 $W^{t}$ denotes the colimit in $C$ of the diagram
 
       \[\xymatrix{ W^{i_{1}}&&  W^{i_{2}}\ar@{.}[r]&W^{i_{m-2}}&&W^{i_{m-1}}\\
    &W^{i_{1}^{\prime}}\ar[lu]^{\kappa^{i_{1}^{\prime}}_{i_{1}}} \ar[ru]_{\delta^{i_{1}^{\prime}}_{i_{2}}}
    \ar@{.}[rrr]
        &&& W^{i_{m-1}^{\prime}}\ar[lu]^{\kappa^{i_{m-1}^{\prime}}_{i_{m-2}}} 
        \ar[ru]_{\delta^{i_{m-1}^{\prime}}_{i_{m-1}}}&}\]
 coming from the Grothendieck presentation of the $n$-tree $t$.

 $Span:=Span(Set)$ is an important $MG$-category. 
  Examples of $n$-spans in $Set$ are given by the \textit{HOM construction} : For each globular categories $\mathcal{C}\in\mathbb{G}\mathbb{C}AT$,
and for each pair of objects $A,B\in \mathcal{C}_{n}$, we associate the following $n$-span $HOM(A,B)$ in $Set$
        \[\xymatrix{&HOM(A,B)_{n}\ar[ld]_{s^{n}_{n-1}}\ar[rd]^{t^{n}_{n-1}}\\
  HOM(A,B)_{n-1}\ar[d]_{s^{n-1}_{n-2}}\ar[rrd]^(.3){t^{n-1}_{n-2}}&&HOM(A,B)_{n-1}\ar[d]^{t^{n-1}_{n-2}}\ar[lld]^(.7){s^{n-1}_{n-2}}\\
  HOM(A,B)_{n-2}\ar@{.>}[d]&&HOM(A,B)_{n-2}\ar@{.>}[d]\\   
  HOM(A,B)_{1}\ar[d]_{s^{1}_{0}}\ar[rrd]^(.3){t^{1}_{0}}&&HOM(A,B)_{1}\ar[d]^{t^{1}_{0}}\ar[lld]^(.7){s^{1}_{0}}\\ 
  HOM(A,B)_{0}&&HOM(A,B)_{0}}\]
which is such that $HOM(A,B)_{n}:=hom_{\mathcal{C}_{n}}(A,B)$, and for all $0\leqslant k<n$, $HOM(A,B)_{k}:=hom_{\mathcal{C}_{k}}(s^{n}_{k}(A),s^{n}_{k}(B))$, where 
$(s^{k+1}_{k})_{0\leqslant k\leqslant n-1}$ and $(t^{k+1}_{k})_{0\leqslant k\leqslant n-1}$, are given by the functor sources and functor targets of the 
globular category $\mathcal{C}$. 

Now consider a category $C$ with pushouts and a globular object $(C,W)$ in $Cospan(C)$.
If $t$ is a $n$-tree we can associate between $W(1(n))$ and $W(t)\in Cospan(C)_{n}$
the $n$-span $HOM(W(1(n)),W(t))$, such that elements of the set $HOM(W(1(n)),W(t))_{n}$ are diagrams of the form

  \[ \xymatrix{W^{n}\ar[rr]^{f_{n}}&&W^{t}\\
     W^{n-1}\ar[u]<+2pt>^{\delta^{n}_{n-1}}\ar[u]<-2pt>_{\kappa^{n}_{n-1}}
       \ar[rr]<+2pt>^{f^{-}_{n-1}}\ar[rr]<-2pt>_{f^{+}_{n-1}}
       &&W^{\partial t}\ar[u]<+2pt>^{\delta^{t}_{\partial t}}\ar[u]<-2pt>_{\kappa^{t}_{\partial t}}\\
       W^{n-(k-1)}\ar@{.>}[u]<+2pt>^{}\ar@{.>}[u]<-2pt>_{}      
        \ar[rr]<+2pt>^{f^{-}_{n-(k-1)}}\ar[rr]<-2pt>_{f^{+}_{n-(k-1)}}&&
       W^{\partial^{k-1}t}\ar@{.>}[u]<+2pt>^{}\ar@{.>}[u]<-2pt>_{}\\    
       W^{n-k}\ar[u]<+2pt>^{\delta^{n-(k-1)}_{n-k}}\ar[u]<-2pt>_{\kappa^{n-(k-1)}_{n-k}}
       \ar[rr]<+2pt>^{f^{-}_{n-k}}\ar[rr]<-2pt>_{f^{+}_{n-k}}
       &&W^{\partial^{k}t}\ar[u]<+2pt>^{\delta^{\partial^{k-1}t}_{\partial^{k}t}}\ar[u]<-2pt>_{\kappa^{\partial^{k-1}t}_{\partial^{k}t}}\\
       W^{1}\ar@{.>}[u]<+2pt>^{}\ar@{.>}[u]<-2pt>_{}      
        \ar[rr]<+2pt>^{f^{-}_{1}}\ar[rr]<-2pt>_{f^{+}_{1}}&&
       W^{\partial^{n-1}t}\ar@{.>}[u]<+2pt>^{}\ar@{.>}[u]<-2pt>_{}\\    
       W^{0}\ar[u]<+2pt>^{\delta^{1}_{0}}\ar[u]<-2pt>_{\kappa^{1}_{0}}
       \ar[rr]<+2pt>^{f^{-}_{0}}\ar[rr]<-2pt>_{f^{+}_{0}}
       &&W^{0}\ar[u]<+2pt>^{\delta^{\partial^{n-1}t}_{0}}\ar[u]<-2pt>_{\kappa^{\partial^{n-1}t}_{0}}       
    }\]  
    
   which commute serially, that is: 
   
 \begin{itemize}
   \item $f_{n}\delta^{n}_{n-1}=\delta^{t}_{\partial t}f^{-}_{n-1}$, $f_{n}\kappa^{n}_{n-1}=\kappa^{t}_{\partial t}f^{+}_{n-1}$  
    \item $\forall 1\leqslant k\leqslant n-1, f^{-}_{n-(k-1)}\delta^{n-(k-1)}_{n-k}=\delta^{\partial^{k-1}t}_{\partial^{k}t}f^{-}_{n-k}$,     
    $f^{-}_{n-(k-1)}\kappa^{n-(k-1)}_{n-k}=\kappa^{\partial^{k-1}t}_{\partial^{k}t}f^{-}_{n-k}$ 
    and  $f^{+}_{n-(k-1)}\delta^{n-(k-1)}_{n-k}=\delta^{\partial^{k-1}t}_{\partial^{k}t}f^{+}_{n-k},     
    f^{+}_{n-(k-1)}\kappa^{n-(k-1)}_{n-k}=\kappa^{\partial^{k-1}t}_{\partial^{k}t}f^{+}_{n-k}$. 
 \end{itemize}
See also paragraph 9.2 in \cite{lein1:oper}. 

\begin{remark}
 Spans in sets can be seen in a conceptual way: In \cite{street-petit-topos}, Ross Street has shown that objects of $Span$ 
 are internal sets in the petit topos $\omega$-$\G$ of $\omega$-graphs, and in \cite{mark-topos} Mark Weber has shown that $Span$ is a discrete opfibration classifier in 
 the $2$-topos $\GC$ of globular categories 
\end{remark}

We can summarise many constructions of this section 
with the following diagram in $2$-$\mathbb{C}AT$

 \[\xymatrix{\mathbb{C}AT_{push}\ar[rrrr]^{j}\ar[dddd]_(.3){(-)^{op}}\ar[rrdd]^{Cospan}&&&&\mathbb{C}AT\ar[dddd]_(.3){(-)^{op}}\ar[rdd]^{Cospan}\\\\
  &&\mathbb{M}\mathbb{G}\mathbb{C}AT\ar[rrr]^{i}&&&\mathbb{G}\mathbb{C}AT\\\\
  \mathbb{C}AT_{pull}\ar[rruu]_{Span}\ar[rrrr]^{k}&&&&\mathbb{C}AT\ar[ruu]_{Span}}\]
Recall that in this section we denote by $1$ the terminal globular category, and in 
definition \ref{def-glob-category} we have denoted the globe category by $\mathbb{G}$. 
\begin{lemma}
We have the following identifications
\begin{itemize}
  \item $(1\downarrow i)$ is the comma category of the globular objects 
$\xymatrix{1\ar[r]^{W}&\mathcal{C}}$ such that $\mathcal{C}\in\mathbb{M}\mathbb{G}\mathbb{C}AT$,\\
 \item $(1\downarrow i\circ Cospan)$ is the comma category of the globular objects 
$\xymatrix{1\ar[r]^(.35){W}&Cospan(C)}$ such that $C\in\mathbb{C}AT_{push}$,\\ 
 \item $(\mathbb{G}\downarrow j)$ is the comma category of the globular objects 
$\xymatrix{\mathbb{G}\ar[r]^{W}&C}$ in $C$ such that $C\in\mathbb{C}AT_{push}$.\\
\item We have the following isomorphisms of categories
\[\xymatrix{(1\downarrow i\circ Cospan)\ar[r]^(.6){\sim}&(\mathbb{G}\downarrow j)}\qquad \xymatrix{(1\downarrow i\circ Span)\ar[r]^(.55){\sim}&(\mathbb{G}^{op}\downarrow k)}\] 
\end{itemize}
\end{lemma}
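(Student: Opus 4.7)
My plan is to prove the lemma by direct unfolding of definitions followed by an explicit construction of the two displayed isomorphisms. For the first three bullets, I would observe that each identification follows from chasing what a morphism in the corresponding comma category is. In $(1 \downarrow i)$, a morphism from the terminal globular category $1$ into $i(\mathcal{C})$ is, by the very definition of the $2$-functor $i$ and of $\GC$, precisely a morphism of globular categories $1 \to \mathcal{C}$, i.e., a globular object of $\mathcal{C}$ in the sense already fixed above; the comma structure records the pair $(\mathcal{C}, W)$ together with morphisms in $\MGC$ that intertwine the chosen globular objects. The second bullet is this statement specialised to $\mathcal{C} = Cospan(C)$, which makes sense precisely because $Cospan$ lands in $\MGC$ when restricted to $\mathbb{C}AT_{push}$. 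The third bullet unpacks a morphism $\mathbb{G} \to j(C)$ in $\mathbb{C}AT$ as a functor from the globe category into $C$, which is exactly a coglobular object of $C$.

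For the isomorphism $(1 \downarrow i \circ Cospan) \simeq (\mathbb{G} \downarrow j)$, the plan is to construct mutually inverse functors. In one direction, given a globular object $W : 1 \to Cospan(C)$ with components the $n$-cospans $W_n : Oct(n)^{op} \to C$, the globularity equations $s^n_{n-1}(W_n) = W_{n-1} = t^n_{n-1}(W_n)$, combined with the isomorphisms $e^{\pm}_n$ and the inclusions $i^{\pm}_n$, force $W_n(\bullet^+_k) = W_n(\bullet^-_k) = W_k(\bullet_k)$ for every $k < n$. Setting $W^k := W_k(\bullet_k)$ and reading off the arrows of $Oct(n)^{op}$ collapses all the cospan data to just two morphisms $s^{k+1}_k, t^{k+1}_k : W^k \to W^{k+1}$ per level, determined by whether the source of the arrow in $Oct(n)^{op}$ is a node labelled $+$ or $-$; these morphisms satisfy the globular relations, so the data constitute a coglobular object $W : \mathbb{G} \to C$.

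In the reverse direction, given $W : \mathbb{G} \to C$ I would build each $W_n : Oct(n)^{op} \to C$ by putting $W_n(\bullet_n) := W^n$ and $W_n(\bullet^{\pm}_k) := W^k$ for $k < n$, and assigning each arrow in $Oct(n)^{op}$ that starts at a node labelled $+$ to the appropriate cosource composite, and each arrow starting at a node labelled $-$ to the corresponding cotarget composite. The coglobular identities $s^{n+1}_n \circ s^n_{n-1} = s^{n+1}_n \circ t^n_{n-1}$ and their duals ensure that this assignment is well defined and functorial on $Oct(n)^{op}$. The two constructions are inverse by direct inspection, and they transport morphisms because a pushout-preserving functor $F : C \to C'$ induces an $MG$-functor $Cospan(F)$ by levelwise post-composition, which intertwines the two extraction procedures. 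The dual isomorphism $(1 \downarrow i \circ Span) \simeq (\mathbb{G}^{op} \downarrow k)$ follows by repeating the same argument after replacing $C$ with $C^{op}$, which turns pushouts into pullbacks and cospans into spans.

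The main obstacle is the combinatorial bookkeeping with the posets $Oct(n)$: one must verify carefully that the apparently rich structure of an $n$-cospan collapses, under the globularity equations, to exactly the information of the coglobular object $W^\bullet$, and that morphisms on both sides of the correspondence match up. Once this is checked at low $n$, the general case follows by an evident induction on the level, so no essential difficulty arises beyond systematic diagram-chasing.
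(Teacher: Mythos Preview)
Your proposal is correct. The paper, however, gives no proof whatsoever for this lemma: it is stated and then the text moves directly to the next subsection, evidently treating the result as an immediate consequence of unwinding the definitions of the comma categories and of the functors $i$, $j$, $k$, $Span$, $Cospan$ introduced just above.

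What you have written is therefore not a different route so much as an explicit spelling-out of what the paper leaves implicit. Your identification of the key combinatorial point---that the globularity constraints on a family of $n$-cospans $W_n : Oct(n)^{op}\to C$ force $W_n(\bullet^{+}_k)=W_n(\bullet^{-}_k)=W_k(\bullet_k)$ for all $k<n$, so that the entire octahedral data collapse to a single coglobular sequence $W^0\rightrightarrows W^1\rightrightarrows\cdots$---is exactly the content the author is taking for granted, and your inverse construction and the duality argument for the $Span$ case are both sound. The only remark worth adding is that the first three bullets are literally the definitions of the respective comma categories once one remembers that $1\in\GC$ represents the functor ``pick a globular object'' and that $\mathbb{G}\in\mathbb{C}AT$ represents ``pick a coglobular object''; there is nothing to prove there beyond naming the objects and morphisms, which you do correctly.
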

 
 \subsection{Digression on $\T$-categories}
\label{T-Cat}

Let us recall the approach to $\omega$-operads by Tom Leinster using 
 $\T$-categories\footnote{For an arbitrary cartesian monad $\mathbb{M}$ on a category with pullbacks the notion of $\mathbb{M}$-category were first suggested by Albert Burroni in 1971; see \cite{burroni-tcat}.}
 (see his book \cite{lein1:oper}). We recall the notions of $\T$-graph and $\T$-category which are also defined in 
 \cite{kach:nscellsfinal,lein1:oper}. Consider the bigategory $\Span(\T)$ as defined in Leinster's book (see \cite{lein1:oper}).
A $\T$-graph $(C,d,c)$ is a diagram of $\OG$ such as
\[\xymatrix{\T(G)&C\ar[l]_(.3){d}\ar[r]^c&G}\]
$\T$-graphs are endomorphisms of $\Span(\T)$ and they form a category $\TG$. 

If we fix $G\in\OG(0)$, the endomorphisms on $G$ in $\Span(\T)$ forms a subcategory of $\TG$ which is denoted $\TG_{G}$. The
category $\TG_{G}$ is monoidal with tensor given by :
\[(C,d,c)\otimes(C',d',c'):=(\T(C)\times_{\T(G)}C',\mu(G)\T(d)\pi_0,c\pi_1),\]
and with unit given by $I(G)=(G,\eta(G),1_G)$. The object $I(G)$ is also an identity morphism of $\Span(\T)$. The $\omega$-graph $G$ is 
called the $\omega$-graph of globular arities, or the $\omega$-graph of arities for short. 

\begin{remark}
\label{notation-couleurs}
   A $p$-cell of $G$ is denoted by $g(p)$ and this notation has the following meaning: The symbol $g$ indicates the "colour", and the symbol $p$ 
   point out that we must see $g(p)$ as a $p$-cell of $G$, because $G$ has to be seen as an $\omega$-graph even though it is just a set.
\end{remark}

A $\T$-graph $(C,d,c)$ equipped with a morphism $I(G)\xrightarrow{p}(C,d,c)$ is called a pointed $\T$-graph. That means that one 
has a $2$-cell $I(G)\xrightarrow{p}(C,d,c)$ of $\Span(\T)$ such that $dp=\eta(G)$ and $cp=1_{G}$. A pointed $\T$-graph is
denoted $(C,d,c;p)$. We define in a natural way the category $\TG_p$ of pointed $\T$-graphs, and also the category 
$\TG_{p,G}$ of $G$-pointed $\T$-graphs: Their morphisms keep pointing in an obvious direction.

A constant $\omega$-graph is an $\omega$-graph $G$ such that $\forall{n,m\in\mathbb{N}}$ we have $G(n)=G(m)$ and such that source and
target maps are identity. We write $\OG_{c}$ for the corresponding category
of constant $\omega$-graphs. We write $\TG_{c}$ for the
subcategory of $\TG$ consisting of $\T$-graphs with underlying
$\omega$-graphs of globular arity which are constant
$\omega$-graphs, and $\TG_{p,c}$ for the subcategory of $\TG_p$ consisting of pointed
$\T$-graphs with underlying $\omega$-graphs of globular arity which are
constant $\omega$-graphs. Also for a given $G$ in $\OG_{c}$, we write $\TG_{p,c,G}$ for the fiber
subcategory in $\TG_{p,c}$.
\begin{definition}
Consider a $\T$-graphs $(C,d,c)$. If $k\geqslant 1$, two $k$-cells $x,y$ of $C$ are parallel if 
 $s^{k}_{k-1}(x)=s^{k}_{k-1}(y)$ and if $t^{k}_{k-1}(x)=t^{k}_{k-1}(y)$. In that case we write $x\|y$.
\end{definition}
 A $\T$-category is a monad in the bicategory $\Span(\T)$ or in an equivalent way a 
monoid of the monoidal category $\TG_{G}$ (for a specific $G$). The category of 
$\T$-categories will denoted $\TC$, and that of $\T$-categories over
the same $\omega$-graph of globular arities $G$ is be denoted
$\TC_{G}$. A $\T$-category $(B,d,c;\gamma,p)\in \TC$ is specifically
given by the morphism of operadic composition
$(B,d,c)\otimes(B,d,c)\xrightarrow{\gamma}(B,d,c)$ and the
operadic unit $I(G)\xrightarrow{p}(B,d,c)$ satisfying axioms of
associativity and unity that we can find in Leinster's book \cite{lein1:oper}.  Note that
$(B,d,c;\gamma,p)$ has $(B,d,c;p)$ as natural underlying pointed
$\T${-}graph. Algebras for a $\T$-category are just algebras for its underlying monad.    
  
\subsection{$\omega$-Operads of Endomorphism and Coendomorphism}
  \label{omega-Operads_of_Endomorphism_and_Coendomorphism}   
  
Let $\mathcal{C}\in\mathbb{G}\mathbb{C}AT$. Recall from \cite{bat:monglob} that the category of 
collections $\omega$-$\mathbb{C}oll(\mathcal{C})$
in $\mathcal{C}$ has as objects globular functors $\xymatrix{\mathbb{T}ree\ar[r]^{A}&\mathcal{C}}$ and as morphisms, globular natural transformations between such globular functors. It is straightforward to see that this defines a strict $2$-functor $\mathbb{C}oll:=Hom_{\mathbb{G}\mathbb{C}AT}(\mathbb{T}ree,-)$

\[\xymatrix{\mathbb{G}\mathbb{C}AT\ar[rr]^{\mathbb{C}oll}&&\mathbb{C}AT}\]

The theorem 6.1 in \cite{bat:monglob} gives criteria for finding many categories of collections with monoidal structure. Colimits commuting with the
monoidal structure of an $AMG$-category is given in definition 5.3 in \cite{bat:monglob}.

\begin{theorem}
\label{theorem61}
If $\mathcal{C}$ is an $AMG$-category such that colimits in $\mathcal{C}$ commute with
its monoidal structure, then $\omega$-$\mathbb{C}oll(\mathcal{C})$ has a natural monoidal structure.
\end{theorem}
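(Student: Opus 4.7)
The strategy is a Day-convolution-style construction, transporting the $AMG$-structure of $\mathcal{C}$ along the universal property of $\T\text{ree}$ enjoyed as the free $MG$-category on a globular object. Recall that $\T\text{ree}$ itself carries a canonical substitution structure: an $n$-tree $t$ admits decompositions $t=s\circ(t_{1},\ldots,t_{k})$ obtained by choosing a ``top tree'' $s$ whose $k$ leaves are replaced by subtrees $t_{i}$; this is exactly the operation arising from the monad $\T$ and the globular object $1\to\T\text{ree}$. The plan is to use this substitution structure to define a tensor product of collections by left Kan extension, and then to promote the coherences of the $AMG$-structure of $\mathcal{C}$ to coherences on $\omega\text{-}\mathbb{C}oll(\mathcal{C})$.

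First, given collections $A,B:\T\text{ree}\to\mathcal{C}$, I would define
\[
(A\otimes B)(t) \;=\; \colim_{t=s\circ(t_{1},\ldots,t_{k})}\; A(s)\otimes\bigl(B(t_{1})\otimes\cdots\otimes B(t_{k})\bigr),
\]
where the outer $\otimes$ is the $AMG$-tensor of $\mathcal{C}$ and the inner iterated tensor uses both the $AMG$-tensor and the appropriate $MG$-tensors $\otimes^{n}_{p}$ so as to produce an object of the correct globular dimension. The colimit runs over the evident comma-like category of substitution decompositions of $t$. Functoriality of $(A\otimes B)$ in $t$ (i.e.\ compatibility with globular sources and targets) follows because cosources/cotargets of $t$ induce cosources/cotargets at the level of decompositions, and the $MG$-functoriality of $\otimes$ in $\mathcal{C}$ handles the matching. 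The unit $J$ of the monoidal structure on collections is the representable collection concentrated on the linear trees $1(n)$ sent to the $AMG$-unit $I$ of $\mathcal{C}$, with the required coherence data supplied by the unit constraints of the pseudo-monoid.

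Second, bifunctoriality of $\otimes$ in $A$ and $B$ is automatic from the colimit formula. The key point where the hypothesis is used is in checking associativity and the unit axioms. Given $A,B,C$, both iterated tensors $(A\otimes B)\otimes C$ and $A\otimes(B\otimes C)$ can be expressed, after expanding the defining colimits, as colimits over two-step substitution decompositions of a tree $t$. The hypothesis that colimits in $\mathcal{C}$ commute with its monoidal structure lets the $AMG$-tensor be pulled past the defining colimits, so both iterated colimits reduce to the same colimit over the category of two-step decompositions; the comparison isomorphism is then induced by the pseudo-associator of the $AMG$-structure of $\mathcal{C}$ together with the associativity of substitution of trees. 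Unit isomorphisms are handled analogously, using that the subcategory of decompositions where one factor is trivial is final.

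Third, the coherence axioms (pentagon and triangle) follow from the corresponding pseudo-monoid axioms in $\mathcal{C}$ by uniqueness of colimit-induced maps. I expect the main obstacle to be precisely the bookkeeping of the globular indices: one must verify that the innermost tensor above is formed using the correct $\otimes^{n}_{p}$ matching the $MG$-arities of the subtrees $t_{i}$ relative to the top tree $s$, and that the comparison of iterated decompositions in the associator is compatible with $MG$-functoriality on both sides. All of this is a formal consequence of the $AMG$-axioms once one sets up the Kan-extension formalism correctly, and of the colimit commutation hypothesis which guarantees that the defining colimits behave well under iteration.
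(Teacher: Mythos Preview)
The paper does not prove this theorem at all: it is quoted verbatim as Theorem~6.1 of Batanin's \cite{bat:monglob} and immediately followed by ``For our purpose the main example\ldots''. So there is no proof in the paper to compare against; the result is imported wholesale from the cited source.

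Your sketch is nonetheless in the right spirit and is essentially the argument Batanin gives in \cite{bat:monglob}: the tensor of collections is a substitution product, defined by left Kan extension along the multiplication $\mu:\T^{2}(1)\to\T(1)$, and the hypothesis on colimits is exactly what makes this tensor associative. Two small points of precision are worth tightening. First, the ``decompositions $t=s\circ(t_{1},\ldots,t_{k})$'' should be read as elements of the fibre of $\mu$ over $t$, i.e.\ as trees-of-trees $T\in\T^{2}(1)$ with $\mu(T)=t$; for globular trees this indexing is discrete, so your colimit is in fact a coproduct, which simplifies the associativity check. Second, the roles of the two tensors should be kept straight: the \emph{inner} product $B(t_{1})\otimes\cdots\otimes B(t_{k})$ is built using the $MG$-tensors $\otimes^{n}_{p}$ according to the shape of the top tree $s$ (this is exactly $B$ applied, via its $MG$-functoriality, to the tree-of-trees), while the \emph{outer} product with $A(s)$ uses the augmentation tensor coming from the pseudo-monoid structure. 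Your phrase ``uses both the $AMG$-tensor and the appropriate $MG$-tensors'' is correct but would benefit from being made explicit in this way; once it is, the associator and unitors, and their coherence, follow formally as you indicate.
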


For our purpose the main example of such an $AMG$-category 
 as in this theorem is $Span$. The monoidal category $\mathbb{C}oll(Span)$ is equivalent to the monoidal category $\TG_{1}$ of $\T$-graphs over the terminal
 $\omega$-graph $1$ (see \ref{T-Cat} and \cite{lein1:oper}). The category of monoids in $\TG_{1}$ is denoted $\TC_{1}$, and objects of this category are thus
 $\omega$-operads of Batanin in $Span$. So in this article we see the $\omega$-operad $K$\footnote{We prefer to denote it $B^{0}_{C}$ to point out that 
 we consider it as the first step of a sequence of $\omega$-operads : The $2$-coloured $\omega$-operads $B^{n}_{C}$ ($n\geqslant 1$) of the weak higher 
transformations which are object of the category $\TC_{1\sqcup 1}$ of the $\T$-categories over the sum $1\sqcup 1$ of the terminal $\omega$-graph 
 $1$ with itself (see the section \ref{The_Weak_omega_category_of_the_Weak_omega_categories} and the article \cite{kach:nscellsfinal}). 
 The letter "$B$" refer to the name "Batanin", and the subscript $C$ means contractible.} of Batanin as a specific 
$\T$-categories in $\TC_{1}$.
  
Now we are ready to express the main result of this section, which in fact is just a corollary of proposition 7.2 in \cite{bat:monglob}.
\begin{corollary}
\label{le-corollaire}
For each object $(C,W)$ in $(\mathbb{G}\downarrow j)$ we can associate the $\omega$-operad \, $Coend(W)$
of coendomorphisms, given by the following collection 
\[Coend(W):=(HOM(W^{n},W^{t}))_{n\in \mathbb{N}, t\in\mathbb{T}ree}\] 
 Also for each morphism in $(\mathbb{G}\downarrow j)$
\[\xymatrix{(C,W)\ar[r]^{f}&(C',W')}\]
we can associate a morphism of $\omega$-operads 
\[\xymatrix{Coend(W)\ar[rr]^{Coend(f)}&&Coend(W')}\]
Furthermore this construction is functorial; thus it defines a functor
 \[\xymatrix{(\mathbb{G}\downarrow j)\ar[rr]^{Coend}&&\TC_1}\]
Also for each object $(C,W)$ in $(\mathbb{G}^{op}\downarrow k)$ we can associate the $\omega$-operad \, $End(W)$
of endomorphisms, given by the following collection 
\[End(W):=(HOM(W^{t},W^{n}))_{n\in \mathbb{N}, t\in\mathbb{T}ree}.\]   
 Also for each morphism in $(\mathbb{G}^{op}\downarrow k)$
 \[\xymatrix{(C,W)\ar[r]^{f}&(C',W')}\]
we can associate a morphism of $\omega$-operads 
\[\xymatrix{End(W)\ar[rr]^{End(f)}&&End(W')}\]
Furthermore this construction is functorial, thus it defines a functor
 \[\xymatrix{(\mathbb{G}^{op}\downarrow k)\ar[rr]^{End}&&\TC_1}\]
\end{corollary}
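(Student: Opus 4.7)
The plan is to invoke the universal property of $\mathbb{T}ree$ recalled earlier in the excerpt, together with the fact that $Span$ is an $AMG$-category in which colimits commute with the monoidal structure, so that everything follows by specializing Proposition~7.2 of \cite{bat:monglob} to the HOM collection. Starting with $(C,W)\in(\mathbb{G}\downarrow j)$, I first transport $W$ along the isomorphism $(\mathbb{G}\downarrow j)\simeq(1\downarrow i\circ Cospan)$ of the preceding lemma to view it as a globular object $1\to Cospan(C)$ in the $MG$-category $Cospan(C)$. The universality of $\xymatrix{1\ar[r]&\mathbb{T}ree}$ then produces an essentially unique $MG$-functor $W(-)\colon\mathbb{T}ree\to Cospan(C)$ which sends the $n$-linear tree $1(n)$ to $W^{n}$ and an arbitrary $n$-tree $t$ to the $n$-cospan with apex $W^{t}$ given by the colimit formula in $C$ coming from the Grothendieck decomposition of $t$.

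Next, for each $n\in\mathbb{N}$ and each $n$-tree $t$ I apply the HOM construction in the globular category $Cospan(C)$ to the pair $(W^{n},W^{t})$ to obtain an $n$-span $HOM(W^{n},W^{t})$ in $Set$. Bundling these together over all $n$ and all $n$-trees gives a globular functor $\mathbb{T}ree\to Span$, that is, an object of $\omega\text{-}\mathbb{C}oll(Span)$; the source/target matching of this globular functor amounts to verifying that the faces of the $n$-span $HOM(W^{n},W^{t})$ are indeed $HOM(W^{n-1},W^{\partial t})$ etc., which is immediate from the definitions of $s^{n}_{n-1}$ and $t^{n}_{n-1}$ on $Cospan(C)$. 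By \cref{theorem61} the category $\omega\text{-}\mathbb{C}oll(Span)\simeq\TG_{1}$ inherits a monoidal structure from the $AMG$-structure on $Span$, and Proposition~7.2 of \cite{bat:monglob} provides the monoid structure: the operadic composition on $Coend(W)$ is obtained, given an outer diagram $W^{n}\to W^{t}$ and inner diagrams $W^{k_{i}}\to W^{t_{i}}$ fitting serially into the cospan structure, by the universal property of the pushout colimit $W^{t}$ in $C$, and associativity and unitality are forced by universality.

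Functoriality of $Coend$ amounts to checking that a morphism $f\colon(C,W)\to(C',W')$ in $(\mathbb{G}\downarrow j)$ induces a morphism of the resulting $\omega$-operads. Since $f$ preserves pushouts and is compatible with the coglobular structure, it induces a canonical map $W^{t}\to W'^{f(t)}$ on each colimit by universality, natural in $t$; post-composition along these maps sends $HOM(W^{n},W^{t})$ to $HOM(W'^{n},W'^{t})$ and yields a morphism of collections. Compatibility with operadic composition follows because both the composition in $Coend(W)$ and in $Coend(W')$ are defined via pushouts that $f$ preserves, and the unit is preserved because $f$ sends the cospan-structure maps of $W$ to those of $W'$. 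Preservation of identities and composites in $(\mathbb{G}\downarrow j)$ is then a routine diagram chase.

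The dual statement for $End$ is obtained by replacing everywhere "pushouts" by "pullbacks", $Cospan(C)$ by $Span(C)$, the coglobular object by a globular object, and using the isomorphism $(\mathbb{G}^{op}\downarrow k)\simeq(1\downarrow i\circ Span)$; the argument is formally identical. The only non-formal step I expect to require genuine care is the identification of the source/target $n$-spans of $HOM(W^{n},W^{t})$ with the ones prescribed by the globular structure on $\mathbb{T}ree$ and on $Span$; once this compatibility is in place, the monoid axioms and the functoriality follow mechanically from the universal properties of pushouts (respectively pullbacks) and from \cref{theorem61}.
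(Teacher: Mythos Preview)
Your proposal is correct and follows essentially the same approach as the paper: the paper does not give a detailed proof of this corollary at all, stating only that it ``is just a corollary of proposition 7.2 in \cite{bat:monglob}'', with the surrounding material (the universal property of $\mathbb{T}ree$, the HOM construction, and \cref{theorem61}) serving as the setup. Your write-up simply spells out how these ingredients assemble into the result, which is exactly the intended derivation.
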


 \begin{proposition} 
 \label{la-proposition}      
 If $W\in (\mathbb{G}^{op}\downarrow k)$ then 
         $\xymatrix{End(W)\ar[r]^(.44){\sim}&Coend(W^{op})}$ 
         in $\TC_1$.
 \end{proposition}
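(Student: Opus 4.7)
The plan is to exhibit the isomorphism as a formal consequence of the duality between spans and cospans under $(-)^{op}$, combined with naturality of the $HOM$ construction.

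First, I would establish a canonical isomorphism of $AMG$-categories
\[
\Span(C) \;\simeq\; Cospan(C^{op})
\]
for any $C \in \mathbb{C}AT_{pull}$. At the level of objects and morphisms in each dimension, a functor $Oct(n) \to C$ (i.e.\ an $n$-span in $C$) is the same data as a functor $Oct(n)^{op} \to C^{op}$ (an $n$-cospan in $C^{op}$); the cosource/cotarget functors $s^n_{n-1},t^n_{n-1}$ constructed from the inclusions $i^{\pm}_n e^{\pm}_n$ are visibly symmetric under this identification. The monoidal globular structure transports likewise: gluing of $n$-spans along lower boundaries is defined by pullback in $C$, which is precisely pushout in $C^{op}$, which is the gluing of the dual $n$-cospans. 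This is compatible with the unit once we observe that an initial object of $C^{op}$ is a terminal object of $C$.

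Next, I would transport $W \in (\mathbb{G}^{op} \downarrow k)$ through this duality. The associated globular object $1 \to \Span(C)$ corresponds under the above isomorphism to the globular object $1 \to Cospan(C^{op})$ associated with $W^{op}: \mathbb{G} \to C^{op}$, which is exactly the element of $(\mathbb{G} \downarrow j)$ appearing in the statement. By the universal property of $\T\text{ree}$ recalled in \ref{Main_examples_of_Globular_Monoidal_Categories}, the induced $MG$-functors $W(-): \T\text{ree} \to \Span(C)$ and $W^{op}(-): \T\text{ree} \to Cospan(C^{op})$ then agree under the duality. In particular, for every $n$-tree $t$, the $n$-span $W^t$ obtained as a limit in $C$ from the Grothendieck decomposition of $t$ matches the $n$-cospan $(W^{op})^t$ obtained dually as a colimit in $C^{op}$, and $W^n$ matches $(W^{op})^n$.

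Having matched the underlying data, the levelwise comparison is immediate from
\[
\mathrm{hom}_{\Span(C)_n}(A,B) \;=\; \mathrm{hom}_{Cospan(C^{op})_n}(B,A),
\]
which is the tautology that hom-sets reverse under $(-)^{op}$. Applied with $A = W^t$, $B = W^n$ in top dimension, and with the analogous matching on each lower dimension $k < n$ (where we substitute $s^n_k$ and $t^n_k$ of the respective globular categories), this produces compatible bijections that assemble into an isomorphism of $n$-spans in $Set$
\[
HOM(W^t, W^n) \;\xrightarrow{\;\sim\;}\; HOM((W^{op})^n, (W^{op})^t),
\]
i.e.\ an isomorphism $End(W)_t \simeq Coend(W^{op})_t$ of the underlying collections in $\omega\text{-}\mathbb{C}oll(\Span)$.

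Finally, I would check that this isomorphism is a morphism of $\omega$-operads, i.e.\ respects operadic composition and unit. By the construction of corollary~\ref{le-corollaire}, the operadic multiplication on both $End(W)$ and $Coend(W^{op})$ is the canonical one of theorem~\ref{theorem61}, built from the $AMG$-structure of $\Span$ (respectively $Cospan(C^{op})$) via the general endomorphism/coendomorphism recipe of proposition~7.2 in \cite{bat:monglob}. Since the duality of the first step is an isomorphism of $AMG$-categories and the $Coend$/$End$ constructions are natural with respect to such isomorphisms, the induced multiplication and unit match on the nose. The main obstacle is purely bookkeeping: tracking the orientation reversal of every arrow consistently through the tree-diagrams, so that the cosource/cotarget maps $\delta^{}_{}$ and $\kappa^{}_{}$ of the coglobular object $W^{op}$ in $C^{op}$ are recognised as the source/target maps of $W$ in $C$ and fit correctly into the serial commutativity conditions defining the $n$-cells of $HOM$.
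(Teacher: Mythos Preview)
The paper states this proposition without proof, treating it as essentially self-evident from the duality built into the definitions. Your argument supplies the details the paper omits, and your overall strategy---identify $\Span(C)$ with $Cospan(C^{op})$ levelwise, transport the induced $MG$-functor $W(-)$ across, and read off the bijection on hom-sets---is the natural one and is correct.

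One point worth sharpening: the identification $\Span_n(C)\simeq Cospan_n(C^{op})$ is \emph{contravariant} at each level (a natural transformation $F\Rightarrow F'$ in $[Oct(n);C]$ becomes $(F')^{op}\Rightarrow F^{op}$ in $[Oct(n)^{op};C^{op}]$), so strictly speaking what you have is an isomorphism of globular categories between the levelwise opposite of $\Span(C)$ and $Cospan(C^{op})$. You use this correctly when you write $\hom_{\Span(C)_n}(A,B)=\hom_{Cospan(C^{op})_n}(B,A)$, but calling the identification an ``isomorphism of $AMG$-categories'' without qualification obscures this reversal. It would be cleaner to say that the $HOM$ construction applied in the levelwise-opposite globular category swaps its two arguments, which is precisely what turns $End$ into $Coend$. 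Your final paragraph already anticipates this bookkeeping issue; just make the contravariance explicit rather than leaving it implicit in the phrase ``isomorphism of $AMG$-categories''.
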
  
\begin{definition}
If $B\in\TC_1$ then an algebra for $B$ in the sense of Batanin is given by a morphism in $\TC_1$ 
\[\xymatrix{B\ar[rr]^{}&&End(W)}\] 
where $\xymatrix{W: \mathbb{G}^{op}\ar[r]&Set}$ is an object of $\OG$.
\end{definition}
\begin{proposition}[\cite{lein1:oper}]
If $B\in\TC_1$, then an algebra for $B$ in the sense of Batanin, and an algebra for $B$ in the sense of Leinster (see the section \ref{T-Cat}) coincide.
\end{proposition}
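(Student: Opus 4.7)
The plan is to produce a natural bijection between the set of morphisms $\phi : B \longrightarrow End(W)$ in $\TC_1$ (Batanin algebras on $W$) and the set of algebra structures for the underlying monad of $B$ on the $\omega$-graph $W$ (Leinster algebras on $W$). The key observation is that, once unpacked, both notions are families of operations of shape $t$ on $W$ indexed by the cells of $B$ lying over the trees $t \in \T(1)$; the equivalence between them is then essentially formal.

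First I would unpack both sides. Writing $B_{n,t}$ for the fibre of $d : B \longrightarrow \T(1)$ over the $n$-tree $t$, a morphism $\phi \in \TC_1(B, End(W))$ consists of a family of maps $\phi_{n,t} : B_{n,t} \longrightarrow HOM(W^t, W^n)$ compatible with source and target, with the operadic composition $\gamma$, and with the operadic unit $p$. Here $W^t$ is the set of $t$-shaped arrangements of cells of $W$, produced by the $HOM$-construction of corollary \ref{le-corollaire} applied to the canonical globular presentation of $W$. On the Leinster side, an algebra is an $\OG$-map $\alpha : B \circ W \longrightarrow W$, where an $n$-cell of $B \circ W$ is a pair $(b,\bar w)$ with $b \in B_{n,t}$ and $\bar w \in W^t$, subject to the usual associativity and unity axioms expressed through $\gamma$ and $p$.

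Second, I would define the bijection by the mutually inverse formulas $\alpha(b,\bar w) := \phi_{n,t}(b)(\bar w)$ and $\phi_{n,t}(b)(\bar w) := \alpha(b,\bar w)$; these are visibly inverse at the level of underlying set maps. The substance of the proposition is then to verify that the two families of axioms transport correctly: the equation $\phi \circ \gamma_B = \gamma_{End(W)} \circ (\phi \otimes \phi)$ becomes the algebra associativity $\alpha \circ (B \circ \alpha) = \alpha \circ (\gamma_B \circ W)$, and $\phi \circ p_B = p_{End(W)}$ becomes the unit axiom $\alpha \circ (p_B \circ W) = 1_W$. This translation is forced by the very construction of $End(W)$: operadic composition inside $End(W)$ is by definition the composition of operations $W^t \longrightarrow W^n$ with reshaping according to grafting of trees, which is exactly the datum used to iterate the algebra action in $B \circ (B \circ W)$.

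The main obstacle, and essentially the only non-tautological point, is the canonical identification of $W^t$ (produced by the $HOM$-construction as an iterated pullback following the Grothendieck decomposition of $t$) with the fibre over $t$ of the $\T$-span composition $B \circ W$. Both are iterated limits of sets of cells of $W$ indexed by the subtrees appearing in the Grothendieck presentation of $t$, and the cartesianness of the monad $\T$ together with the universal property of $\T\text{ree}$ recorded in section \ref{Main_examples_of_Globular_Monoidal_Categories} force them to agree naturally in $W$ and in $t$. Once this identification is in place, the bijection and the compatibility of the axioms become a direct unwinding of definitions, recovering the informal correspondence already used in Leinster \cite{lein1:oper} and Batanin \cite{bat:monglob}.
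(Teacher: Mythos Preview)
Your outline is correct and is essentially the standard argument: currying the family $\phi_{n,t}:B_{n,t}\to HOM(W^{t},W^{n})$ into a single map $\alpha:B\times_{\T(1)}\T(W)\to W$, and checking that the monoid axioms for $\phi$ match the monad-algebra axioms for $\alpha$ once one identifies the fibre of $\T(W)\to\T(1)$ over a tree $t$ with the set $W^{t}$ of $t$-shaped pasting diagrams in $W$.

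The paper, however, does not supply its own proof of this proposition: it is stated with a citation to Leinster \cite{lein1:oper} and nothing more. So there is no in-paper argument to compare against; what you have written is precisely the unpacking one would carry out from the definitions in \cite{lein1:oper} (see especially the description there of the monad induced by a $\T$-category and of algebras for an operad via the endomorphism operad). One small wording slip: you speak of ``the fibre over $t$ of the $\T$-span composition $B\circ W$'', but $B\circ W$ is an $\omega$-graph, not something over $\T(1)$; what you mean, and what your formulas actually use, is the fibre of $\T(W)\to\T(1)$ over $t$, paired with $B_{n,t}$. With that adjusted, the argument goes through.
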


\section{Standard actions associated to a coglobular complex in $\TC_c$} 
   \label{Some_standard_diagrams_of_the_omega-transformations}
     
  A $\mathbb{T}$-category over any $\omega$-graph can be seen as a coloured $\omega$-operad (see \cite{kach:nscellsfinal,lein1:oper}), and the category 
 $\TC$ of coloured $\omega$-operads is locally presentable, thus it is a category with pushouts. However it is in the context of
 the locally presentable category $\TC_c$ of $\mathbb{T}$-categories over constant $\omega$-graphs (see paragraph 
 \ref{T-graphs_with_contractible_units} and the article \cite{kach:nscellsfinal}),
that we are going to build \textit{the standard actions associated to a coglobular complex in $\TC_c$}. This concept arises as an application of the previous section
to the category $\TC_c$. Consider the following diagram in $\mathbb{C}AT_{Push}$
  
  \[\xymatrix{\TC_c\ar[rr]^{\mathbb{A}lg(.)}
    &&\mathbb{C}AT^{op}\ar[rr]^{Ob(.)}&&SET^{op} }\]
For each coglobular object $(\mathbb{T}\text{-}\mathbb{C}AT_c,W)$ in $\mathbb{T}$-$\mathbb{C}AT_c$, we have the following diagram in
   $(\mathbb{G}\downarrow j)$
    
  \[\xymatrix{&&\mathbb{G}\ar[dll]_{W}\ar[d]^{A^{op}}\ar[drr]^{A^{op}_{0}}\\
      \TC_c\ar[rr]_{\mathbb{A}lg(.)}&&\mathbb{C}AT^{op}\ar[rr]_{Ob(.)}&&SET^{op} }\]   
If we apply to this diagram the functor $Coend$ of corollary \ref{le-corollaire} and if we use
proposition \ref{la-proposition}, we obtain the following diagram in $\TC_1$
   
   \[\xymatrix{Coend(W)\ar[rr]^{Coend(\mathbb{A}lg(.))}&& 
 Coend(A^{op})\ar[rr]^{Coend(Ob(.))}&&End(A_{0}) }\]
that we call the standard action in $\TC_1$ associated to the coglobular object
 $(\mathbb{T}\text{-}\mathbb{C}AT_c,W)\in (\mathbb{G}\downarrow j)$ in $\mathbb{T}$-$\mathbb{C}AT_c$. 
 
 Now we are ready to explain the philosophy of the standard action associated to a coglobular complex
 in $\TC_1$ : The category $\TC_c$ is locally finitely presentable and the forgetful functor
      \[\xymatrix{\TC_c\ar[rr]^{V}&&\TG_{p,c}}\]
is monadic (see \cite{lein1:oper}), thus according to the proposition 5.5.6 of \cite{francisborceux:handbook2}, $V$ has rank.
 Let us call $M$ its left adjoint. 
 
Now consider a category $P\TC_c$ of $\omega$-operads equipped with a structure that we call "$P$", such that it is locally finitely 
presentable, and equipped with a monadic forgetful functor $\xymatrix{U_{P} : P\TC_c\ar[r]&\TC_c}$. 
Various concrete choices for $P$ will be considered later in this paper.
We denote by $F_{P}$ the left adjoint to $U_{P}$
 \[\xymatrix{P\TC_c\ar[rr]<+4pt>_{\top}^{U_{P}}&&\TC_c\ar[ll]<+6pt>^{F_{P}}\ar[rr]<+4pt>_{\top}^{V}&&\TG_{p,c}\ar[ll]<+6pt>^{M}}\]
Thus we are in a situation where $V\circ U_{P}$ is monadic and the induced monad $\T_P$ on
$\TG_{p,c}$ has rank. Also we get the functor
 \[\xymatrix{P:=F_P\circ M : \TG_{p,c}\ar[rrr]&&&P\TC_c}\]
which assigns the free $PT$-categories on pointed $\T$-graphs.
 
Consider also the following coglobular complex in $\TG_{p,c}$, that we call the 
  coglobular complex for the higher transformations in  $\TG_{p,c}$, because it is built with a combinatoric that we need 
  for higher transformations (see \cite{kach:nscellsfinal}):
  
     \[\xymatrix{C^{0}\ar[rr]<+2pt>^{\delta^{1}_{0}}\ar[rr]<-2pt>_{\kappa^{1}_{0}}
  &&C^{1}\ar[rr]<+2pt>^{\delta^{1}_{2}}\ar[rr]<-2pt>_{\kappa^{2}_{1}}
  &&C^{2}\ar@{.>}[r]<+2pt>^{}\ar@{.>}[r]<-2pt>_{}
  &C^{n-1}\ar[rr]<+2pt>^{\delta^{n}_{n-1}}\ar[rr]<-2pt>_{\kappa^{n}_{n-1}}
  &&C^{n}\ar@{.}[r]<+2pt>\ar@{.}[r]<-2pt>&}\] 
Let us recall the combinatorics involved in this coglobular complex. Pointings $p$ of each collection involved 
in this specific coglobular complex are denoted with the symbol $\lambda$ :
$C^{0}$ is Batanin's system of composition, i.e. there is the
collection $\T(1)\xleftarrow{d^0}C^{0}\xrightarrow{c^0}1$
where $C^0$ precisely contains the symbols of the compositions of the
$\omega$-categories $\mu^{m}_{p}\in{C^{0}(m)}$($0\leq
p<m$), plus the operadic unary symbols $u_{m}\in{C^{0}(m)}$. More
specifically:
\begin{description}
\item $\forall{m}\in\mathbb{N}$, $C^{0}$ contains the $m$-cell $u_{m}$
  such that: $s^{m}_{m-1}(u_{m})=t^{m}_{m-1}(u_{m})=u_{m-1}$ (if $m\geq
  1$); $d^{0}(u_{m})=1(m)(=\eta(1\cup{2})(1(m)))$,
  $c^{0}(u_{m})=1(m)$.
\item $\forall{m}\in\mathbb{N}-\{0,1\}$, $\forall{p}\in\mathbb{N}$,
  such that $m>p$, $C^{0}$ contains the $m$-cell $\mu^{m}_{p}$ such
  that: If $p=m-1$,
  $s^{m}_{m-1}(\mu^{m}_{m-1})=t^{m}_{m-1}(\mu^{m}_{m-1})=u_{m-1}$.  If
  $0\leq p<m-1$,
  $s^{m}_{m-1}(\mu^{m}_{p})=t^{m}_{m-1}(\mu^{m}_{p})=\mu^{m-1}_{p}$.
  Also $d^{0}(\mu^{m}_{p})=1(m)\star^{m}_{p}1(m)$, and inevitably
  $c^{0}(\mu^{m}_{p})=1(m)$.
\item Furthemore $C^{0}$ contains the $1$-cell $\mu^{1}_{0}$ such that
   $s^{1}_{0}(\mu^{1}_{0})=t^{1}_{0}(\mu^{1}_{0})=u_{0}$,
    $d^{0}(\mu^{1}_{0})=1(1)\star^{1}_{0}1(1)$, also inevitably
  $c^{0}(\mu^{1}_{0})=1(1)$.
\end{description}
The system of composition $C^0$ has got a well-known pointing
$\lambda^{0}$ which is defined by: $\forall{m}\in\mathbb{N}$,
$\lambda^{0}(1(m))=u_{m}$.

Firstly we will define a collection $(C,d,c)$ which will be useful to
build the collections of $n$-transformations ($n\in {\mathbb{N}^*}$). $C$ contains two copies of
the symbols of $C^0$, each having a distinct colour: The
symbols formed with the letters $\mu$ and $u$ are those of colour $1$, and
those formed with the letters $\nu$ and $v$ are those of colour
$2$. Let us be more precise:
\begin{description}                     
\item $\forall{m}\in\mathbb{N}$, $C$ contains the $m$-cell $u_{m}$
  such that: $s^{m}_{m-1}(u_{m})=t^{m}_{m-1}(u_{m})=u_{m-1}$ (if $m\geq
  1$) and $d(u_{m})=1(m)$, $c(u_{m})=1(m)$.
\item $\forall{m}\in\mathbb{N}-\{0,1\}$, $\forall{p}\in\mathbb{N}$,
  such that $m>p$, $C$ contains the $m$-cell $\mu^{m}_{p}$ such that: If
  $p=m-1$,
  $s^{m}_{m-1}(\mu^{m}_{m-1})=t^{m}_{m-1}(\mu^{m}_{m-1})=u_{m-1}$.  If
  $0\leq p<m-1$,
  $s^{m}_{m-1}(\mu^{m}_{p})=t^{m}_{m-1}(\mu^{m}_{p})=\mu^{m-1}_{p}$.
  Also $d(\mu^{m}_{p})=1(m)\star^{m}_{p}1(m)$, $c(\mu^{m}_{p})=1(m)$.
\item Furthemore $C$ contains the $1$-cell $\mu^{1}_{0}$ such that
  $s^{1}_{0}(\mu^{1}_{0})=t^{1}_{0}(\mu^{1}_{0})=u_{0}$ and
  $d(\mu^{1}_{0})=1(1)\star^{1}_{0}1(1)$, $c(\mu^{1}_{0})=1(1)$.
\item Besides, $\forall{m}\in\mathbb{N}$, $C$ contains the $m$-cell
  $v_{m}$ such that: $s^{m}_{m-1}(v_{m})=t^{m}_{m-1}(v_{m})=v_{m-1}$
  (if $m\geq 1$) and $d(v_{m})=2(m)$, $c(v_{m})=2(m)$.
\item $\forall{m}\in\mathbb{N}-\{0,1\}$, $\forall{p}\in\mathbb{N}$,
  such that $m>p$, $C$ contains the $m$-cell $\nu^{m}_{p}$ such that: If
  $p=m-1$,
  $s^{m}_{m-1}(\nu^{m}_{m-1})=t^{m}_{m-1}(\nu^{m}_{m-1})=v_{m-1}$.  If
  $0\leq p<m-1$,
  $s^{m}_{m-1}(\nu^{m}_{p})=t^{m}_{m-1}(\nu^{m}_{p})=\nu^{m-1}_{p}$.
  Also $d(\nu^{m}_{p})=2(m)\star^{m}_{p}2(m)$, $c(\nu^{m}_{p})=2(m)$.
\item Furthemore $C$ contains the $1$-cell $\nu^{1}_{0}$ such that
  $s^{1}_{0}(\nu^{1}_{0})=t^{1}_{0}(\nu^{1}_{0})=v_{0}$ and
  $d(\nu^{1}_{0})=2(1)\star^{1}_{0}2(1)$, $c(\nu^{1}_{0})=2(1)$.
\end{description}

$C^1$ is the system of operations of $\omega$-functors.  It
is built on the basis of $C$ adding to it a single symbol of
functor (for each cell level):$\forall{m}\in\mathbb{N}$ the $F^{m}$ $m$-cell
is added, which is such that: If $m\geq
1$, $s^{m}_{m-1}(F^{m})=t^{m}_{m-1}(F^{m})=F^{m-1}$.  Also
$d^{1}(F^{m})=1(m)$ and $c^{1}(F^{m})=2(m)$.

$C^2$ is the system of operations of the natural
$\omega$-transformations. $C^2$ is built on $C$, adding to it two
symbols of functor (for each cell level) and a symbol of natural
transformation. More precisely
\begin{description}
\item $\forall{m}\in\mathbb{N}$ we add the $m$-cell $F^{m}$
  such that: If $m\geq 1$,
  $s^{m}_{m-1}(F^{m})=t^{m}_{m-1}(F^{m})=F^{m-1}$.  Also
  $d^{2}(F^{m})=1(m)$ and $c^{2}(F^{m})=2(m)$.
\item Then $\forall{m}\in\mathbb{N}$ we add the $m$-cell $H^{m}$ 
  such that: If $m\geq
  1$, $s^{m}_{m-1}(H^{m})=t^{m}_{m-1}(H^{m})=H^{m-1}$.  Also
  $d^{2}(H^{m})=1(m)$ and $c^{2}(H^{m})=2(m)$.
\item And finally we add $1$-cell $\tau$ such that:
  $s^{1}_{0}(\tau)=F^{0}$ and $t^{1}_{0}(\tau)=H^{0}$.  Also
  $d^{2}(\tau)=1_{1(0)}$ and $c^{2}(\tau)=2(1)$.
\end{description}
We can point out that the $2$-coloured collections $C^{i}$ ($i=1,2$)
are naturally equipped with a pointing $\lambda^{i}$ defined by
$\lambda^{i}(1(m))=u_{m}\text{ and }\lambda^{i}(2(m))=v_{m}$.

In order to define the general theory of the $n$-transformations ($n\in {\mathbb{N}^*}$), it is
necessary to define the systems of operations $C^{n}$ for the superior
$n$-transformations ($n\geq 3$). This paragraph can be left out in
the first reading. Each collection $C^{n}$ is built on $C$, adding to
it the required cells. They contain four large groups of cells: The
symbols of source and target $\omega$-categories, the
symbols of operadic units (obtained on the basis of $C$), the symbols
of the $\omega$-functors (sources and targets), and the symbols of the $n$-transformations
(natural $\omega$-transformations, $\omega$-modification, etc). More precisely, on the
basis of $C$:
\begin{description}
\item[Symbols of the $\omega$-Functors] $\forall{m}\in\mathbb{N}$, $C^{n}$ contains
  the $m$-cells $\alpha^{m}_{0}$ and $\beta^{m}_{0}$ such as: If
  $m\geq 1$,
  $s^{m}_{m-1}(\alpha^{m}_{0})=t^{m}_{m-1}(\alpha^{m}_{0})=\alpha^{m-1}_{0}$
  and
  $s^{m}_{m-1}(\beta^{m}_{0})=t^{m}_{m-1}(\beta^{m}_{0})=\beta^{m-1}_{0}$.
  Furthermore $d^{n}(\alpha^{m}_{0})=d^{n}(\beta^{m}_{0})=1(m)$ and
  $c^{n}(\alpha^{m}_{0})=c^{n}(\beta^{m}_{0})=2(m)$.
\item[Symbols of the Higher $n$-Transformations]
  $\forall{p}$, with $1\leq p$ $\leq n-1$, $C^{n}$ contains the $p$-cells
  $\alpha_{p}$ and $\beta_{p}$ which are such as: $\forall{p}$, with
  $2\leq p\leq n-1$,
  $s^{p}_{p-1}(\alpha_{p})=s^{p}_{p-1}(\beta_{p})=\alpha_{p-1}$ and
  $t^{p}_{p-1}(\alpha_{p})=t^{p}_{p-1}(\beta_{p})=\beta_{p-1}$. If
  $p=1$, $s^{1}_{0}(\alpha_{1})=s^{1}_{0}(\beta_{1})=\alpha^{0}_{0}$
  and $t^{1}_{0}(\alpha_{1})=t^{1}_{0}(\beta_{1})=\beta^{0}_{0}$.
  What's more, $\forall{p}$, with $1\leq p\leq n-1$,
  $d^{n}(\alpha_{p})=d^{n}(\beta_{p})=1^{0}_{p}(1(0))$ and
  $c^{n}(\alpha_{p})=c^{n}(\beta_{p})=2(p)$.  Finally $C^{n}$ contains
  the $n$-cell $\xi_{n}$ such that $s^{n}_{n-1}(\xi_{n})=\alpha_{n-1}$,
  $b^{n}_{n-1}(\xi_{n})=\beta_{n-1}$ and
  $d^{n}(\xi_{n})=1^{0}_{n}(1(0))$ and $c^{n}(\xi_{n})=2(n)$.
\end{description}
We can see that $\forall{n}\in{\mathbb{N}^{\ast}}$, the $2$-colored
collection $C^{n}$ is naturally equipped with the pointing
 $1\cup 2\xrightarrow{\lambda^{n}}(C^{n},d,c)$ defined
as:
 \[\forall{m}\in{\mathbb{N}},\lambda^{n}(1(m))=u_{m} \text{ and }
\lambda^{n}(2(m))=v_{m}.\]
 The set
$\{C^{n}/n\in\mathbb{N}\}$ has a canonical structure of
coglobular complex.  This coglobular complex is generated by diagrams
\[\xymatrix{C^{n}\ar@<+2pt>[rr]^(.5){\delta^{n}_{n+1}}\ar@<-2pt>[rr]_(.5){\kappa^{n}_{n+1}}&&C^{n+1}}\]
of pointed $2$-coloured collections.  For $n\geq 2$, these diagrams
are defined as follows: First the $(n+1)$-colored collection contains
the same symbols of operations as $C^{n}$ for the $j$-cells, $0\leq j\leq
n-1$ or $n+2\leq j<\omega$. For the $n$-cells and the $(n+1)$-cells the
symbols of operations will change: $C^{n}$ contains the $n$-cell
$\xi_{n}$ whereas $C^{n+1}$ contains the $n$-cells $\alpha_{n}$ and
$\beta_{n}$, in addition contains the $(n+1)$-cell $\xi_{n+1}$.  If
one denotes by $C^{n}-{\xi_{n}}$ the $n$-coloured collection obtained on
the basis of $C^{n}$ by taking from it the $n$-cell $\xi_{n}$, then
$\delta^{n}_{n+1}$ is defined as follows:
$\delta^{n}_{n+1}\vert_{C^{n}-{\xi_{n}}}$ (i.e the restriction of
$\delta^{n}_{n+1}$ to $C^{n}-{\xi_{n}}$) is the canonical injection
$C^{n}-{\xi_{n}}\hookrightarrow C^{n+1}$ and
$\delta^{n}_{n+1}(\xi_{n})=\alpha_{n}$.  In a similar way
$\kappa^{n}_{n+1}$ is defined as follows:
$\kappa^{n}_{n+1}\vert_{C^{n}-{\xi_{n}}}=\delta^{n}_{n+1}\vert_{C^{n}-{\xi_{n}}}$
and $\kappa^{n}_{n+1}(\xi_{n})=\beta_{n}$. We can notice that
$\delta^{n}_{n+1}$ and $\kappa^{n}_{n+1}$ keeps pointing, i.e we have
for all $n\geq 1$ the equalities
$\delta^{n}_{n+1}\lambda^{n}=\lambda^{n+1}$ and
$\kappa^{n}_{n+1}\lambda^{n}=\lambda^{n+1}$.

The morphisms of $2$-colored pointing collections of the diagram
\[\xymatrix{C^{0}\ar@<+2pt>[r]^{\delta^{0}_{1}}\ar@<-2pt>[r]_{\kappa^{0}_{1}}&C^{1}
  \ar@<+2pt>[r]^{\delta^{1}_{2}}\ar@<-2pt>[r]_{\kappa^{1}_{2}}&C^{2}
  \ar@<+2pt>[r]^{\delta^{2}_{3}}\ar@<-2pt>[r]_{\kappa^{2}_{3}}&C^{3}}\]
have a similar definition:

We have for all integers $0\leq p<n$ and for all $\forall{m}\in\mathbb{N}$:
\begin{description}
\item $\delta^{0}_{1}(\mu^{n}_{p})=\mu^{n}_{p}$;
  $\delta^{0}_{1}(u_{m})=u_{m}$;
  $\kappa^{0}_{1}(\mu^{n}_{p})=\nu^{n}_{p}$;
  $\kappa^{0}_{1}(u_{m})=v_{m}$.
\item Also: $\delta^{1}_{2}(\mu^{n}_{p})=\mu^{n}_{p}$;
  $\delta^{1}_{2}(u_{m})=u_{m}$;
  $\delta^{1}_{2}(\nu^{n}_{p})=\nu^{n}_{p}$;
  $\delta^{1}_{2}(v_{m})=v_{m}$; $\delta^{1}_{2}(F^{m})=F^{m}$. And
  $\kappa^{1}_{2}(\mu^{n}_{p})=\mu^{n}_{p}$;
  $\kappa^{1}_{2}(u_{m})=u_{m}$;
  $\kappa^{1}_{2}(\nu^{n}_{p})=\nu^{n}_{p}$;
  $\kappa^{1}_{2}(v_{m})=v_{m}$; $\kappa^{1}_{2}(F^{m})=H^{m}$.
\item Finally: $\delta^{2}_{3}(\mu^{n}_{p})=\mu^{n}_{p}$;
  $\delta^{2}_{3}(u_{m})=u_{m}$;
  $\delta^{2}_{3}(\nu^{n}_{p})=\nu^{n}_{p}$;
  $\delta^{2}_{3}(v_{m})=v_{m}$;
  $\delta^{2}_{3}(F^{m})=\alpha^{m}_{0}$;
  $\delta^{2}_{3}(H^{m})=\beta^{m}_{0}$;
  $\delta^{2}_{3}(\tau)=\alpha_{1}$. And
  $\kappa^{2}_{3}(\mu^{n}_{p})=\mu^{n}_{p}$;
  $\kappa^{2}_{3}(u_{m})=u_{m}$;
  $\kappa^{2}_{3}(\nu^{n}_{p})=\nu^{n}_{p}$;
  $\kappa^{2}_{3}(v_{m})=v_{m}$; 
  $\kappa^{2}_{3}(F^{m})=\alpha^{m}_{0}$;
  $\kappa^{2}_{3}(H^{m})=\beta^{m}_{0}$;
  $\kappa^{2}_{3}(\tau)=\beta_{1}$.
\end{description}
The pointed $2$-coloured collections $C^{n}$ $(n\in\mathbb{N}^*)$ are
the systems of operations of the $n$-transformations.  
  
If we apply the functor $P$ to this coglobular complex we obtain a  coglobular complex in  $P\TC_c$
    
    \[\xymatrix{B^{0}_{P}\ar[rr]<+2pt>^{\delta^{1}_{0}}\ar[rr]<-2pt>_{\kappa^{1}_{0}}
  &&B^{1}_{P}\ar[rr]<+2pt>^{\delta^{1}_{2}}\ar[rr]<-2pt>_{\kappa^{2}_{1}}
  &&B^{2}_{P}\ar@{.>}[r]<+2pt>^{}\ar@{.>}[r]<-2pt>_{}
  &B^{n-1}_{P}\ar[rr]<+2pt>^{\delta^{n}_{n-1}}\ar[rr]<-2pt>_{\kappa^{n}_{n-1}}
  &&B^{n}_{P}\ar@{.}[r]<+2pt>\ar@{.}[r]<-2pt>&}\]
which is also, when we forget its structure "$P$", a coglobular object $W=B^{\bullet}_{P}$ of $\TC_c$,
  and thus we obtain its resulting standard action
  
   \[\xymatrix{Coend(B^{\bullet}_{P})\ar[rr]^{Coend(\mathbb{A}lg(.))}&& 
 Coend(A_{P}^{op})\ar[rr]^{Coend(Ob(.))}&&End(A_{0,P}) }\]
where in particular $Coend(B^{\bullet}_{P})$ is the monochromatic $\omega$-operad of coendomorphism associated to this
   coglobular complex. This kind of standard actions is called a \textit{standard action for higher transformations} because 
   it is built with the coglobular complex of the higher transformations $C^{\bullet}$ in  $\TG_{p,c}$.  
      
   The main problem of our philosophy, is to build a morphism of $\omega$-operads between the monochromatic $\omega$-operad $B^{0}_{_{P}}$ (the "$0$-step"
   of the coglobular object $B^{\bullet}_{P}$) and the monochromatic $\omega$-operad $Coend(B^{\bullet}_{P})$ (built with the whole coglobular object 
   $B^{\bullet}_{P}$). 
If such a morphism exists then we have a morphism of operads 
 \[\xymatrix{B^{0}_{P}\ar[rrr]&&&End(A_{0,P})}\]
which shows that $B^{0}_{P}$-algebras and all its higher transformations form a $B^{0}_{P}$-algebra. 
In this case we say that $B^{0}_{P}$ has the \textit{fractal property}.

   \section{Contractibility Hypotheses}
  \label{Contractibility_Hypothesis_and_the_Weak_omega_category_of_the_Weak_omega_categories}
  
In this paragraph we will consider two cases : When $P$ is \textit{strict with contractible units} (indicated with the letters $S_{u}$), and when $P$ is 
   \textit{contractible} (indicated with the letter $C$). We will state the hypotheses that the $\omega$-operad $B^{0}_{S_{u}}$ of strict $\omega$-categories, and
   the $\omega$-operad $B^{0}_{C}$ of the weak $\omega$-categories, have the fractal property. 

In the section \ref{Examples_of_Standard_Actions} we give two examples of other higher structure such that it is possible to prove that their 
associated $\omega$-operads $B^{0}_{P}$ have the fractal property. 
  
  \subsection{The functor of the contractible units}
    \label{T-graphs_with_contractible_units}  
  In this paragraph we are going to build 
 the \textit{functor of the contractible units} for $\T$-categories (see \ref{T-Cat}). But first we must define the 
 \textit{pointed $\T$-graphs with contractible units} which are for the pointed $\T$-graphs what the reflexive $\omega$-graphs are for 
 the $\omega$-graphs. In order to define it we are going first to define an intermediary structure on $\T$-graphs. Consider a 
    $\T$-graph $(C,d,c)$, and for each $n\in\mathbb{N}$ we write $C(n)$ for the set of $n$-cells of the $\T$-graph $(C,d,c)$. 
  Note that $G$ is also equipped with a trivial
    reflexivity structure $(G,(1^{p}_{n})_{0\leqslant p<n}))$ 
    where the operations $1^{p}_{n}$ are defined by $1^{p}_{n}(g(p))=g(n)$, and which force
    $c$ to be a morphism of reflexive $\omega$-graphs as well.  
\begin{definition}    
    We say that the $\T$-graph $(C,d,c)$ is equipped with a reflexive structure, if its underlying $\omega$-graph $C$ is equipped 
    with a reflexive structure in the usual sense, such that $d$ is a morphism of reflexive $\omega$-graphs. 
\end{definition}    
     We denote $(C,d,c; (1^{p}_{n})_{0\leqslant p<n})$ a reflexive $\T$-graph where 
    the operations $1^{p}_{n}$ are those of $C$. A morphism between two reflexive $\T$-graphs are just morphism of $\T$-graphs which preserve
    reflexivity, and the category of reflexive $\T$-graphs over constant $\omega$-graphs is denoted by 
    $\TGr_c$.
    
  A pointed $\T$-graph $(C,d,c;p)$ over a constant $\omega$-graph $G$ has contractible units if it is equipped with a monomorphism
    $\xymatrix{\R(G)\ar@{>->}[r]^{i}&C}$ such that $p$ factorises as follow
    
    \[\xymatrix{&&C\ar[lld]_{d}\ar[rrd]^{c}&&\\
    \T(G)&&\R(G)\ar@{>->}[u]^{i}\ar[ll]_{d}\ar[rr]^{c}&&G\\
    &&G\ar[u]^{\eta(G)}\ar[llu]^{\eta(G)}\ar[rru]_{id}}\]      
     such that $p=i\eta(G)$, and such that the induced $\T$-graph: 
     \[\xymatrix{\T(G)&\R(G)\ar[l]_{d}\ar[r]^{c}&G}\]
 is reflexive, i.e the restriction of $d$ on $\R(G)$ is a morphism of reflexive $\omega$-graphs. 
 We let $(C,d,c; p,i,(1^{p}_{n})_{0\leqslant p<n})$ be a pointed $\T$-graph with contractible units. 
 Morphisms of pointed $\T$-graphs with contractible units
   
   \[\xymatrix{(C,d,c; p,i,(1^{p}_{n})_{0\leqslant p<n})\ar[rr]^{(f,h)}&&(C',d',c'; p',i',(1^{p}_{n})_{0\leqslant p<n})}\]
is given by morphisms of pointed $\T$-graphs (see \cite{lein1:oper})
   \[\xymatrix{(C,d,c; p)\ar[rr]^{(f,h)}&&(C',d',c'; p')}\]
such that $fi=i'\R(h)$. 
 The category of pointed $\T$-graphs with contractible units is denoted by $Id_{u}\TG_{p,c}$, and
 a $T$-category has contractible units if its underlying pointed $T$-graph lies in $Id_{u}\TG_{p,c}$.
 Morphisms between two $T$-categories equipped with contractible units are just morphisms
 of $T$-categories which preserve contractible units. Let us write $Id_{u}\TC_c$ this category. 
  It is a locally presentable category, and also we can prove that the forgetful functor is monadic.
    \[\xymatrix{U\TC_c\ar[r]^{U_{Id_{u}}}&\TC_c}\]
    Let us note $F_{Id_{u}}\dashv U_{Id_{u}}$ this adjunction. 
In particular we get the functor of the free $\T$-categories with contractible units $\xymatrix{Id_{u}:\TG_{p,c}\ar[r]^{}&U\TC_c}$
which is a left adjoint 
 and which monad $\T_{Id_{u}}$ has rank. 
  
\subsection{The functors of strictification and the functors of contractibility}
   \label{Weak_contractibility_and_strict_contractibility} 
   
  In \cite{kach:nscellsfinal} we defined a coglobular complex of $\omega$-operads
 
  \[\xymatrix{B^{0}\ar[rr]<+2pt>^{\delta^{1}_{0}}\ar[rr]<-2pt>_{\kappa^{1}_{0}}
  &&B^{1}\ar[rr]<+2pt>^{\delta^{2}_{1}}\ar[rr]<-2pt>_{\kappa^{2}_{1}}
  &&B^{2}\ar@{.>}[r]<+2pt>^{}\ar@{.>}[r]<-2pt>_{}
  &B^{n-1}\ar[rr]<+2pt>^{\delta^{n}_{n-1}}\ar[rr]<-2pt>_{\kappa^{n}_{n-1}}
  &&B^{n}\ar@{.}[r]<+2pt>\ar@{.}[r]<-2pt>&}\]
such that algebras for $B^{0}$ are the weak $\omega$-categories, algebras for $B^{1}$ are the weak
  $\omega$-functors, algebras for $B^{2}$ are the weak $\omega$-natural transformations, etc. However
  Andr{\'e} Joyal has pointed out to us that there are too many coherence cells for each $B^{n}$ when
  $n\geqslant 2$, and gave us a simple example of a natural transformation which cannot be
  an algebra for the $2$-coloured $\omega$-operad $B^{2}$. 
   
  We are going to propose a notion of contractibilty, slightly different from those used in \cite{bat:monglob,kach:nscellsfinal}. 
  This new approach excludes the counterexample of Andr{\'e} Joyal, but also shows that with a \textit{strict version}
  of this corrected version, algebras for the specific \textit{strict $\omega$-operads $B^{n}_{S_{u}}$ with contractible units ($n\geqslant 1$)} 
  of the higher transformations that we propose (see section \ref{The_Strict_omega_category_of_the_Strict_omega_categories}), 
  follow the axioms of the strict higher transformations. 
  
  Let us speak now about our own intuition about this new notion of contractibility that we propose: 
  Roughly speaking, it deals with notions of \textit{root cells plus the loop condition}. It is first based on a basic observation about
  the contractibility of the Batanin's operad $B^{0}_{C}$ : The pairs of cells in $B^{0}_{C}$, say 
  $(x,y)\in B^{0}_{C}(n)\times B^{0}_{C}(n)$, which are parallels and have same arity must be connected by a coherence cells, but notice that they also have the 
  following \textit{extra property}: $s^{n}_{0}(x)=s^{n}_{0}(y)=t^{n}_{0}(x)=t^{n}_{0}(y)$.
  By \textit{extra} we mean that this is extra over the usual definition of contractibility in the sense of Batanin, where normalised $\omega$-operads 
  (which by definition follow this extra condition; see \cite{bat:monglob}), are not explicitly considered in his approach.  
  Let us call this property \textit{the loop property}.
 \begin{definition}
  For any $\mathbb{T}$-graph $(C,d,c)$ over a constant $\omega$-graph $G$,
   a pair of cells $(x,y)$ of $C(n)$ has the \textit{the loop property} if : $s^{n}_{0}(x)=s^{n}_{0}(y)=t^{n}_{0}(x)=t^{n}_{0}(y)$.
 \end{definition}  
  Secondly it concerns the correct $\omega$-operads
   $B^{i}_{C} (i\in \mathbb{N}^{*})$ of the weak higher transformations that we are looking for: In our approach
   these $\omega$-operads should be freely generated by the $\mathbb{T}$-graphs $C^{i} (i\in \mathbb{N}^{*})$ of the weak
   higher transformations (see section \ref{Some_standard_diagrams_of_the_omega-transformations}). We observe the important fact that all
   the symbols specific for the higher transformations in it, have arities which are the reflexivity of $1(0)$, where $1(0)$ denotes the unique $0$-cell of 
  colour $1$ of $\mathbb{T}(1+2)$, i.e $\forall n\geqslant 1$, those cells $x\in C(n)$ specific for the higher transformations are such that
  $d(x)=1^{0}_{n}(1(0))$. Let us call these kind of cells \textit{the root cells}.
  \begin{definition}
  For any $\mathbb{T}$-graph $(C,d,c)$ over a constant $\omega$-graph $G$,
   we call \textit{the root cells} of $(C,d,c)$, those cells whose arities are the reflexivity of a $0$-cell $g(0)$ of $G$, where here "$g$" 
   indicates the colour (see section \ref{notation-couleurs}), or in other words, those cells $x\in C(n)$ ($n\geqslant 1$) such that
   $d(x)=1^{0}_{n}(g(0))$.
 \end{definition}
These notions of \textit{root cells} and \textit{loop condition} are the keys for
  our new approach to contractibility.   
 These observations motivate us to put
  the following definition of what should be a contractible $\T$-graphs $(C,d,c)$.
  For each integers $k\geqslant 1$ let us note
  $\tilde{C}(k)=\{(x,y)\in C(k)\times C(k): x\|y$ 
    and $d(x)=d(y)$, and if also $(x,y)$ is a pair of root cells then they also need to verify the \textit{loop
    property}: 
    $s^{k}_{0}(x)=t^{k}_{0}(y)\}$. Also we denote $\tilde{C}(0)=\{(x,x)\in C(0)\times C(0)\}$.
\begin{definition}
 A contraction on the $\mathbb{T}$-graph $(C,d,c)$, is the datum, for all $k\in\mathbb{N}$, of a map
$\tilde{C}(k)\xrightarrow{[,]_{k}}C(k+1)$ such that
\begin{itemize}
\item$s([\alpha,\beta]_{k})=\alpha, t([\alpha,\beta]_{k})=\beta$,
\item$d([\alpha,\beta]_{k})=1_{d(\alpha)=d(\beta)}$.
\end{itemize}
\end{definition}
A $\mathbb{T}$-graph which is equipped with a
contraction will be called contractible and we use the notation
$(C,d,c;([,]_{k})_{k\in\mathbb{N}})$ for a contractible $\mathbb{T}$-graph.
Nothing prevents a contractible $\mathbb{T}$-graph from being equipped with
several contractions. So here $C\TG_c$ is the category of the contractible
$\mathbb{T}$-graphs equipped with a specific contraction, and morphisms of this
category preserves the contractions. One can also refer to the
category $C\TG_{c,G}$, where here contractible $\mathbb{T}$-graphs are only taken over a specific
constant $\infty$-graph $G$. A pointed contractible $\mathbb{T}$-graphs 
 (see section \ref{T-Cat}) is denoted $(C,d,c;p,([,]_{k})_{k\in\mathbb{N}})$, 
and morphisms between two pointed contractible $\mathbb{T}$-graphs preserve contractibilities and pointings. The category 
of pointed contractible $\mathbb{T}$-graphs is denoted $C\TG_{p,c}$. Objects of the category $C\TG_{p,c}$ 
 are examples of $\mathbb{T}$-graphs equipped with contractible units
 (see section \ref{T-graphs_with_contractible_units}), and $C\mathbb{T}$-$\G_{p,c}$ is a subcategory of $Id_{u}\mathbb{T}$-$\G_{p,c}$.
 A $\T$-category is contractible if its underlying pointed $\T$-graph lies in $C\mathbb{T}$-$\G_{p,c}$. Morphisms between
 two contractible $\T$-categories are morphisms of $\T$-categories which preserve contractibilities. Let us write
 $C\TC_c$ for the category of contractible $\T$-categories.
\begin{remark}
 It is evident that the $\omega$-operad $B^{0}_{C}$ of Michael Batanin is still initial in the category of  
 contractible $\omega$-operads equipped with a composition system, where our new approach of  contractibility is considered.  
\end{remark}
$C\TC_c$ is a locally presentable category, and also we can prove that the forgetful functor is monadic.
  \[\xymatrix{C\TC_c\ar[r]^{U_{C}}&\TC_c}\]
   Let us write $F_{C}\dashv U_{C}$ this adjunction. 
In particular we get the functor of the free contractible $\T$-categories 
$\xymatrix{C : \TG_{p,c}\ar[r]^{}&C\TC_c}$, which is a left adjoint and
whose monad $\T_C$ has rank.

 Now let us explain the notion of \textit{strict contractibility} on a $\mathbb{T}$-graph $(C,d,c)$. We say
that $(C,d,c)$ is strictly contractible if for each integer $k\in\mathbb{N}$ and
each $(x,y)\in\tilde{C}(k)$ we have $x=y$. This notion of strict contractibility plus the notion of 
$\mathbb{T}$-graphs equipped with contractible units (see section \ref{T-graphs_with_contractible_units}), is going to be used to define a notion of 
$\omega$-operads for strict higher transformations (see section \ref{The_Strict_omega_category_of_the_Strict_omega_categories}). It is evident that a morphism in $\mathbb{T}$-$\G_c$ between two strictly contractible $\mathbb{T}$-graphs 
preserves strict contractibility. So we write $S\mathbb{T}$-$\G_c$ for the category of strictly contractible
$\mathbb{T}$-graphs as a full subcategory of $\mathbb{T}$-$\G_c$. The definition of pointed strictly contractible $\mathbb{T}$-graphs 
is obvious : It is just the strictly contractible $\mathbb{T}$-graphs equipped with pointings. Morphisms between 
two pointed strictly contractible $\mathbb{T}$-graphs are morphisms of $\mathbb{T}$-$\G_c$ which preserve pointings. We
write $S\mathbb{T}$-$\G_{p,c}$ the category of pointed strictly contractible $\mathbb{T}$-graphs. 
A $\T$-category is strictly contractible if its underlying pointed $\T$-graph is strictly contractible. Morphisms
between two strictly contractible $\T$-categories are morphisms of $\T$-categories which preserve the
strict contractibilities. Let us write $S\TC_c$ for the category of strictly contractible $\T$-categories.
It is a locally presentable category, and also we can prove that the forgetful functor is monadic.
  \[\xymatrix{S\TC_c\ar[r]^{U_{S}}&\TC_c}\]
    Denote this adjunction by $F_S  \dashv 
    U_S$. 
In particular we get the functor $S$ for the free strictly contractible $\T$-categories 
\[\xymatrix{S : \TG_{p,c}\ar[r]^{}&S\TC_c}\]
which is a left adjoint and whose monad $\T_S$ has rank.

 Two kind of monads on $\TG_{p,c}$ are relevant for us : The monad $\T_C$ and the coproduct of monads 
 $\T_{S_{u}}:=\T_{Id_{u}}\coprod\T_{S}$ which also has rank, and whose category of algebras is denoted $S_{u}\TC_c$. 
In particular the monad $\T_{S_{u}}$ gives the functor $S_{u}$ of free strict $\T$-categories equipped with 
contractible units 
\[\xymatrix{S_u:\TG_{p,c}\ar[r]^{}&S_{u}\TC_c}\] 
The functor $S_u$ is used to build the coglobular complex of the $\omega$-operads of the higher transformations for strict $\omega$-categories 
 (see \ref{The_Strict_omega_category_of_the_Strict_omega_categories}). The functor $C$ is used to build the coglobular complex of 
 the $\omega$-operads of the higher transformations for weak $\omega$-categories (see \ref{The_Weak_omega_category_of_the_Weak_omega_categories}).

\subsection{Contractibility Hypotheses}
 \label{Contractibility_Hypothesis}
 
 By using functors of the previous section 
 \[\xymatrix{S_u : \TG_{p,c}\ar[r]^{}&S_{u}\TC_c}\qquad\xymatrix{C : \TG_{p,c}\ar[r]^{}&C\TC_c}\]  
with the coglobular complexes of the higher transformations $C^{\bullet}$ in $\TG_{p,c}$ we get two coglobular complex in $\TC_c$ :
 
 \[\xymatrix{B_{S_{u}}^{0}\ar[rr]<+2pt>^{\delta^{1}_{0}}\ar[rr]<-2pt>_{\kappa^{1}_{0}}
  &&B_{S_{u}}^{1}\ar[rr]<+2pt>^{\delta^{1}_{2}}\ar[rr]<-2pt>_{\kappa^{2}_{1}}
  &&B_{S_{u}}^{2}\ar@{.>}[r]<+2pt>^{}\ar@{.>}[r]<-2pt>_{}
  &B_{S_{u}}^{n-1}\ar[rr]<+2pt>^{\delta^{n}_{n-1}}\ar[rr]<-2pt>_{\kappa^{n}_{n-1}}
  &&B_{S_{u}}^{n}\ar@{.}[r]<+2pt>\ar@{.}[r]<-2pt>&}\]     
 \[\xymatrix{B_{C}^{0}\ar[rr]<+2pt>^{\delta^{1}_{0}}\ar[rr]<-2pt>_{\kappa^{1}_{0}}
  &&B_{C}^{1}\ar[rr]<+2pt>^{\delta^{1}_{2}}\ar[rr]<-2pt>_{\kappa^{2}_{1}}
  &&B_{C}^{2}\ar@{.>}[r]<+2pt>^{}\ar@{.>}[r]<-2pt>_{}
  &B_{C}^{n-1}\ar[rr]<+2pt>^{\delta^{n}_{n-1}}\ar[rr]<-2pt>_{\kappa^{n}_{n-1}}
  &&B_{C}^{n}\ar@{.}[r]<+2pt>\ar@{.}[r]<-2pt>&}\]
 where $B_{S_{u}}^{0}$ is the $\omega$-operad for strict $\omega$-categories and $B_{C}^{0}$ is the $\omega$-operad of Batanin for
  weak $\omega$-categories.  
 In particular the $\omega$-operads $B^{0}_P$, where $P$ is either $S_{u}$ 
 for \textit{strict with contractible units} (see section \ref{The_Strict_omega_category_of_the_Strict_omega_categories}), 
 or $C$ for \textit{contractible} (see section \ref{The_Weak_omega_category_of_the_Weak_omega_categories}), 
 have interesting common characteristics: Each  $B^{0}_P$ has a universal property in 
 $P\TC_c$, which is to be initial among $\omega$-operads (monochromatic or even coloured) equipped with a composition system (explicitly given
 by the collection $C^{0}$), and satisfying the property $P$. Thus if we show, for a fixed property $P$, that $coend(B^{\bullet}_P)$ is at the same time
 equipped with a composition system and verifies the property $P$, then $B^{0}_P$ must have the fractal property because we would obtain a 
 unique morphism of $P\TC_c$,
 
   \[\xymatrix{B^{0}_P\ar[r]^(.4){!_P}&Coend(B^{\bullet}_P)}\]
which express an action of $B^{0}_P$ on $End(A_{0,P})$ (see section
\ref{Some_standard_diagrams_of_the_omega-transformations}).
 
 At this stage it is fundamental to remark that if for each tree $t$, the coloured $\omega$-operad $B^{t}_P$ keeps the kind of contractibility which is involved 
 (strict contractibility with contractible units, or contractibility), 
 then $Coend(B^{\bullet}_P)$ is not only equipped with a composition system (see section 
 \ref{Composition_Systems}) 
 but also has the property $P$ (see proposition \ref{proposition-contractible} below).
 
 We didn't resolve yet the case $P=S_{u}$, $P=C$, but we believe that it is the case.
 For the rest of this article we accept the following hypotheses:
\begin{Hypothesis}
   \label{Hypothesis}      
  Each coloured $\omega$-operad $B^{t}_{P}$ ($t\in\mathbb{T}ree$) built in $\TC_c$ has the following virtue
 \begin{itemize}
   \item Each $B^{t}_{S_{u}}$ is strictly contractible with contractible units.  
   \item Each $B^{t}_{C}$ is contractible.
  \end{itemize}
\end{Hypothesis} 

We have no full proof of this hypothesis at the moment. For example consider the tree $t=1(1)\star_0 ^1(1)$. The corresponding operad $B^t_C$ is given by the pushout

\[\xymatrix{B^{0}_{C}\ar[d]_{\delta^{0}_{1}}
  \ar[rr]^{\kappa^{0}_{1}}&&B^{1}_{C}\ar[d]^{i_{1}}\\
  B^{1}_{C}\ar[rr]_{i_{2}}&&
    B^{1}_{C}\underset{B^{0}_{C}}\sqcup B^{1}_{C}}\] 
The contractibility hypothesis states that this $\omega$-operad is contractible. Let us show how to see the appearance of the contraction cells in a simple example. For the following example we will use the same notation for the symbol of functor $F_1$ (which is a $1$-cell of
$B^{1}_{C}\underset{B^{0}_{C}}\sqcup B^{1}_{C}$, but which can have arity $1(1)$ or $2(1)$, depending on where this symbol lives for each 
$B^{1}_{C}$ of this pushout). Consider
the $1$-cells $$x=\gamma_1(\gamma_1(F_{1};\gamma_1(\mu^{1}_{0};u_{1}\star^{1}_{0}\mu^{1}_{0}));
F_{1}\star^{1}_{0}F_{1}\star^{1}_{0}F_{1})$$ 
and 
$$y=\gamma_1(\gamma_1(F_{1};\gamma_1(\mu^{1}_{0};\mu^{1}_{0}\star^{1}_{0}u_{1}));
F_{1}\star^{1}_{0}F_{1}\star^{1}_{0}F_{1}).$$ 
It is not difficult to show that 
the arity of $x$ and $y$ is $1(1)\star^{1}_{0}1(1)\star^{1}_{0}1(1)$, and also that $x\|y$. The $2$-cell 
$$\gamma_2([\gamma_1(F_{1};\gamma_1(\mu^{1}_{0};u_{1}\star^{1}_{0}\mu^{1}_{0}));\gamma_1(F_{1};\gamma_1(\mu^{1}_{0};\mu^{1}_{0}\star^{1}_{0}u_{1}))];
1^{1}_{2}(F_{1})\star^{2}_{0}1^{1}_{2}(F_{1})\star^{2}_{0}1^{1}_{2}(F_{1}))$$
 is a coherence cell connecting $x$ and $y$. At this stage we can see that such coherence cells emerge from the contractibility of $B^{1}_{C}$ plus the operadical multiplication of $B^{1}_{C}\underset{B^{0}_{C}}\sqcup B^{1}_{C}$. 
Unfortunately, it is an impossible task to try to generalise these calculations because the combinatorics becomes unmanageable very quickly. 
We believe, however, that there is a more elegant  way to prove this hypothesis based on abstract homotopy theory. Indeed, according to our construction, the    
$\omega$-operad $B_C^t$ is a finite (wide) pushout of contractible  colored operads. If we assume that  there is a nice enough model structure on the category of colored $\omega$-operads such that
 the operads $B_C^n$ are contractible and cofibrant  in the model theoretic sense, and all morphisms in the pushout are cofibrations then the contractibility of $B^t_C$ will follow from the standard model theoretic argument. The existence of such a model structure is very  important not only for the contractibility hypothesis but for many other applications. It will be a subject of our future paper.  

\begin{remark}
 The reader can keep in mind a striking analogy between our operad $Coend(B^{\bullet}_C)$ and the operad $Coend(D^{\bullet})$ constructed by Michael Batanin in \cite{bat:monglob}. The coglobular complex $D^{\bullet}$ consists of the standard topological disks. An amazing fact is that $Coend(D^{\bullet})$ turns out to be a contractible $\omega$-operad which acts naturally on globular complex of points, paths, $2$-paths etc. of a topological space. In this way fundamental $\omega$-groupoid functor is constructed in \cite{bat:monglob}. We believe that our coglobular complex $B^{\bullet}_C$ plays the same role for the homotopy theory of $\omega$-operads, as $D^{\bullet}$ does for homotopy theory of  topological spaces.
\end{remark}

\begin{proposition}
\label{proposition-contractible}
Under the hypotheses above, $Coend(B^{\bullet}_{C})$ is contractible and $Coend(B^{\bullet}_{S_{u}})$ is strictly contractible with contractible
units.
\end{proposition}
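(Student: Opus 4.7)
The plan is to produce, for each parallel pair $(f,g) \in \widetilde{Coend(B^{\bullet}_C)}(n)$ of $n$-cells at the same tree $t$ (and satisfying the loop condition if they are root cells), a coherence filler $[f,g]$, and to verify that it is genuinely a cell of $Coend(B^{\bullet}_C)$. The whole construction is a levelwise transfer of the hypothesised contractibility of $B^t_C$ across the cospan $W(t)$.

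First I locate the filler. By the defining equations of a contraction, $[f,g]$ must be an $(n+1)$-cell of arity $1_t$, hence lies in $HOM(W(\overline{n+1}), W(1^n_{n+1}(t)))_{n+1}$. Using $MG$-functoriality of $W(-)\colon \T\text{ree} \to Cospan(\TC_c)$, the $(n+1)$-cospan $W(1^n_{n+1}(t))$ is the reflexivity of $W(t)$, so its level-$(n+1)$ apex is $W^t = B^t_C$, and its cosource and cotarget at the new level are identities. Consequently the only new piece of data that $[f,g]$ carries beyond $f$ and $g$ is a single morphism $[f,g]_{n+1}\colon B^{n+1}_C \to B^t_C$ in $\TC_c$; its lower-level components are forced to coincide with those of $f$ and of $g$, which already agree everywhere below level $n$ by parallelism.

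Next I define $[f,g]_{n+1}$ by universal property. Since $B^{n+1}_C = F_C\circ M(C^{n+1})$, giving a morphism $B^{n+1}_C \to B^t_C$ in $C\TC_c$ is the same as giving a pointed $\T$-graph morphism $C^{n+1} \to B^t_C$. I prescribe this on the generating cells of $C^{n+1}$ as follows: on the cells inherited from $C^n\setminus\{\xi_n\}$ I take the value dictated by $f_n$, which equals the value dictated by $g_n$ thanks to the cospan commutativity $f_n\circ\delta^n_{n+1} = \delta^t_{\partial t}\circ f^-_{n-1} = \delta^t_{\partial t}\circ g^-_{n-1} = g_n\circ\delta^n_{n+1}$ (and similarly for $\kappa$); on the new boundary cells I put $\alpha_n \mapsto f_n(\xi_n)$ and $\beta_n \mapsto g_n(\xi_n)$; and on the new root cell I put $\xi_{n+1} \mapsto [f_n(\xi_n), g_n(\xi_n)]$, invoking the contraction in $B^t_C$ guaranteed by the contractibility hypothesis. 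The required preconditions for this contraction, namely $f_n(\xi_n)\parallel g_n(\xi_n)$, equality of arities, and the loop property when applicable, all follow from the parallelism of $f$ and $g$ in $Coend$ together with the fact that $\T$-category morphisms preserve arity, and that the $0$-boundary cells $\alpha^0_0, \beta^0_0$ of $\xi_n$ lie in the image of the cospan structure maps, forcing $f_n$ and $g_n$ to coincide on them.

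The source, target, arity, and pointing of $[f,g]$ then come out by design: $[f,g]_{n+1}\circ\delta^n_{n+1} = f_n$ and $[f,g]_{n+1}\circ\kappa^n_{n+1} = g_n$ give $s([f,g]) = f$ and $t([f,g]) = g$, while $d(\xi_{n+1}) = 1^0_{n+1}(1(0))$ together with operadic preservation of arity forces $d([f,g]) = 1^n_{n+1}(t) = 1_t$. The strict case $P=S_u$ is handled verbatim: strict contractibility of each $B^t_{S_u}$ collapses $\widetilde{B^t_{S_u}}$ to a diagonal and hence makes the filler unique, yielding strict contractibility of $Coend(B^{\bullet}_{S_u})$; the additional contractible units of $Coend(B^{\bullet}_{S_u})$ are lifted pointwise from those of each $B^t_{S_u}$, using that the structural maps $\delta^n_{n+1}, \kappa^n_{n+1}$ of the coglobular complex live in $Id_u\TC_c$ and therefore preserve units. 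The main obstacle is the bookkeeping verification that the prescribed extension $C^{n+1}\to B^t_C$ is a morphism of pointed $\T$-graphs (matching source, target, arity, and pointing on every generator) and that the resulting full diagram commutes serially at every level, so as to constitute a bona fide cell of $HOM(W(\overline{n+1}), W(1^n_{n+1}(t)))_{n+1}$; I do not expect any new conceptual ingredient beyond the contractibility hypothesis itself to be needed for this.
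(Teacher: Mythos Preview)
Your proposal is correct and follows essentially the same route as the paper: reduce the construction of the filler $[f,g]$ to defining a single morphism $B^{n+1}_C \to B^t_C$, use the freeness of $B^{n+1}_C$ over the pointed $\T$-graph $C^{n+1}$ to reduce this to specifying values on generators, and send the principal cell $\xi_{n+1}$ to the contraction cell $[f_n(\xi_n),g_n(\xi_n)]$ supplied by the hypothesised contractibility of $B^t_C$. The paper's proof is indexed one level down (it speaks of $(n-1)$-cells and builds an $n$-cell filler) and is terser about the cospan bookkeeping, the loop condition, and the contractible units in the $S_u$ case, but the argument is the same.
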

\begin{proof}
   Consider two $(n-1)$-cells of $Coend(B^{\bullet}_{C})$ which are parallels and have the same arity. That means we give ourselves a diagram in $\TC_{c}$
 \[\xymatrix{B_{C}^{n-1}
    \ar[rr]<+2pt>^{f^{-}_{n-1}}\ar[rr]<-2pt>_{f^{+}_{n-1}}&&
       B_{C}^{t}\\    
       B_{C}^{n-2}\ar[u]<+2pt>^{\delta^{n-2}_{n-1}}\ar[u]<-2pt>_{\kappa^{n-2}_{n-1}}
       \ar[rr]<+2pt>^{f^{-}_{n-2}}\ar[rr]<-2pt>_{f^{+}_{n-2}}
       &&B_{C}^{\partial t}\ar[u]<+2pt>^{\delta^{n-2}_{n-1}}\ar[u]<-2pt>_{\kappa^{n-2}_{n-1}}\\
       B_{C}^{1}\ar@{.>}[u]<+2pt>^{}\ar@{.>}[u]<-2pt>_{}      
        \ar[rr]<+2pt>^{f^{-}_{1}}\ar[rr]<-2pt>_{f^{+}_{1}}&&
       B_{C}^{\partial^{k-1}t}\ar@{.>}[u]<+2pt>^{}\ar@{.>}[u]<-2pt>_{}\\    
       B_{C}^{0}\ar[u]<+2pt>^{\delta^{1}_{0}}\ar[u]<-2pt>_{\kappa^{1}_{0}}
       \ar[rr]<+2pt>^{f^{-}_{0}}\ar[rr]<-2pt>_{f^{+}_{0}}
       &&B_{C}^{0}\ar[u]<+2pt>^{\delta^{1}_{0}}\ar[u]<-2pt>_{\kappa^{1}_{0}}       
    }\]   
such that this diagram commute serially. In particular we have 
    
  \begin{align*}
& f^{-}_{n-1}\delta^{n-2}_{n-1}= f^{+}_{n-1}\delta^{n-2}_{n-1}\\
& \qquad \text{and}\\
& \qquad\qquad f^{-}_{n-1}\kappa^{n-2}_{n-1}= f^{+}_{n-1}\kappa^{n-2}_{n-1}
\end{align*}  

 Contemplate the diagram
      
    \[\xymatrix{B_{C}^{n-1}
    \ar[rrr]<+2pt>^{f^{-}_{n-1}}\ar[rrr]<-2pt>_{f^{+}_{n-1}}&&& B_{C}^{t}\\     
   B_{C}^{n-2}\ar[u]<+2pt>^{\delta^{n-2}_{n-1}}\ar[u]<-2pt>_{\kappa^{n-2}_{n-1}}&&& 
    C^{n-1}\ar[lllu]^{\eta^{n-1}}\ar[u]<+2pt>^{\overline{f^{-}}_{n-1}}\ar[u]<-2pt>_{\overline{f^{+}}_{n-1}}\\
    &&&C^{n-2}\ar[lllu]^{\eta^{n-2}}\ar[u]<+2pt>^{\delta^{n-2}_{n-1}}\ar[u]<-2pt>_{\kappa^{n-2}_{n-1}}}\] 
Call $\alpha^{n-1}$ the principal cell in $C^{n-1}$ and $\alpha^{n-2}$  the principal cell in $C^{n-2}$. By
  definition we have $\delta^{n-2}_{n-1}(\alpha^{n-2})=s^{n-1}_{n-2}(\alpha^{n-1})$ and
      $\kappa^{n-2}_{n-1}(\alpha^{n-2})=t^{n-1}_{n-2}(\alpha^{n-1})$.
      
 We have $\overline{f^{-}}_{n-1}(\alpha^{n-1})\| \overline{f^{+}}_{n-1}(\alpha^{n-1})$, because 
  
   \begin{align*}
   s^{n-1}_{n-2}\overline{f^{-}}_{n-1}(\alpha^{n-1}) &=\overline{f^{-}}_{n-1}(s^{n-1}_{n-2}(\alpha^{n-1}))\\
                                              &=\overline{f^{-}}_{n-1}(\delta^{n-2}_{n-1}(\alpha^{n-2}))\\
                                              &=f^{-}_{n-1}\delta^{n-2}_{n-1}\eta^{n-2}(\alpha^{n-2}) \\
                                              &=f^{+}_{n-1}\delta^{n-2}_{n-1}\eta^{n-2}(\alpha^{n-2})\\
                                              &=\overline{f^{+}}_{n-1}(\delta^{n-2}_{n-1}(\alpha^{n-2}))\\
                                              &=\overline{f^{+}}_{n-1}(s^{n-1}_{n-2}(\alpha^{n-1}))\\
                                              &=s^{n-1}_{n-2}\overline{f^{+}}_{n-1}(\alpha^{n-1})                                         
     \end{align*}      
 In the same way we can prove that
	   \[t^{n-1}_{n-2}\overline{f^{-}}_{n-1}(\alpha^{n-1})=t^{n-1}_{n-2}\overline{f^{+}}_{n-1}(\alpha^{n-1})\] 
 But $B_{C}^{t}$ is contractible thus $\overline{f^{-}}_{n-1}(\alpha^{n-1})$ and  $\overline{f^{+}}_{n-1}(\alpha^{n-1})$   
    are connected by the coherence cell
     \[[\overline{f^{-}}_{n-1}(\alpha^{n-1});\overline{f^{+}}_{n-1}(\alpha^{n-1})]\]
We then build  
    $\xymatrix{C^{n}\ar[r]^{\overline{f_n}}&B_{C}^{t}}$  as follow: If $\alpha^{n}$ is the principal cell of  
      $C^{n}$ then we put 
             \[\overline{f_n}(\alpha^{n})=[\overline{f^{-}}_{n-1}(\alpha^{n-1});\overline{f^{+}}_{n-1}(\alpha^{n-1})]\]    
Thus we obtain
     
     \[\xymatrix{B_{C}^{n}\ar[rrd]^{f_{n}}\\
    B_{C}^{n-1}\ar[u]<+2pt>^{\delta^{n}_{n-1}}\ar[u]<-2pt>_{\kappa^{n}_{n-1}}
    \ar[rr]<+2pt>^{f^{-}_{n-1}}\ar[rr]<-2pt>_{f^{+}_{n-1}}
        &&B_{C}^{t}}\]       
and finally we put $f_{n}:=[f^{-}_{n-1};f^{+}_{n-1}]$. 

The proof of the strict contractibility with contractible units of $Coend(B^{\bullet}_{S_{u}})$ is entirely similar.
\end{proof}

 \subsection{Composition Systems}
   \label{Composition_Systems} 
   
 $B^{\bullet}_{P}$, or $B^{\bullet}$ for short, denotes either the coglobular complex
 $B^{\bullet}_{S_{u}}$, or $B^{\bullet}_{C}$ in $\TC_c$. 
  Also denote by $B^{n}\underset{B^{p}}\sqcup B^{n}$
 the $3$-coloured $\omega$-operad in $\TC_c$ which is obtain by pushing out, in $\TC_c$, the following diagram

  \[\xymatrix{B^{p}\ar[d]_{\delta^{n}_{p}}\ar[r]^{\kappa^{n}_{p}}&B^{n}\\
  B^{n}}\] 
where $\delta^{p}_{n}=\delta^{n}_{n-1}...\delta^{p}_{p+1}$ and 
  $\kappa^{p}_{n}=\kappa^{n}_{n-1}...\kappa^{p}_{p+1}$.  
 For each integers $0\leqslant p<n$ we
 are going to define a morphism in $\TG_{p,c}$
 
  \[\xymatrix{C^{n}\ar[rrr]^(.4){\mu^{n}_{p}}&&&B^{n}\underset{B^{p}}\sqcup B^{n}}\]
which, depending on the universality property required, gives us a unique morphism in $\TC_c$, that
  we still call $\mu^{n}_{p}$ because there is no risk of confusion, 
  
  \[\xymatrix{B^{n}\ar[rrr]^(.4){\mu^{n}_{p}}&&&B^{n}\underset{B^{p}}\sqcup B^{n}}\]
For instance, if we accept the contractibility hypothesis \ref{Hypothesis} whose consequence is that $B^{n}_{C}\underset{B^{p}_{C}}\sqcup B^{n}_{C}$ 
  is still an object of  $C\TC_c$, the universal map $\xymatrix{C^{n}\ar[r]^{\eta^{n}_{C}}&B^{n}_{C}}$  
  gives us such morphism $\mu^{n}_{p}$. We have similar technology for $P=S_{u}$. 
  The key point to defining these morphisms $\mu^{n}_{p}$ is first to describe the different compositions
 $\circ^{n}_{p}$ of the strict higher transformations.   
 If $0< p<n$,  we know that for two strict $n$-transformations $\sigma$ and $\tau$, we have
 
  \[(\sigma\circ^n_p \tau)(a):=\sigma(a)\circ^{n-1}_{p-1}\tau(a)\]
whose operadic interpretation is given by the cell $\gamma(\mu^{n-1}_{p-1};\sigma\ast^{n-1}_{p-1}\tau)$. Then the morphism in $\TG_{p,c}$
          
     \[\xymatrix{C^{n}\ar[rr]^{\mu^{n}_{p}}&&B^{n}\underset{B^{p}}\sqcup B^{n}}\] 
sends the principal cell $\tau$ of $C^{n}$ to the $(n-1)$-cell $\gamma(\mu^{n-1}_{p-1};\sigma\ast^{n-1}_{p-1}\tau)$ 
 of $B^{n}\underset{B^{p}}\sqcup B^{n}$, sends for each $i\in \mathbb{N}$, the     
     $i$-cell $F_{i}$ of $C^{n}$ to the $i$-cell $F_{i}$ of $B^{n}\underset{B^{p}}\sqcup B^{n}$, and sends the 
  $i$-cell $G_{i}$ of $C^{n}$ to the $i$-cell $H_{i}$ of $B^{n}\underset{B^{p}}\sqcup B^{n}$. This morphism 
  of  $\TG_{p,c}$ is boundary preserving in an evident sense. 
 
If $p=0$ it is a bit more complex. We are in the situation of the pushout diagram below
   
 \[\xymatrix{B^{0}\ar[d]_{\delta^{0}_{n}}
  \ar[rr]^{\kappa^{0}_{n}}&&B^{n}\ar[d]^{i_{1}}\\
  B^{n}\ar[rr]_{i_{2}}&&
    B^{n}\underset{B^{0}}\sqcup B^{n}}\] 
First we describe the composition $\circ^{n}_{0}$ for the strict case, to be able to find the cells that we need in our $\omega$-operad.
 Consider the following diagram in the strict $\omega$-category of the strict $\omega$-categories. 
  \[\xymatrix{\mathcal{C}\rtwocell^{F}_{G}{\tau}&\mathcal{D}
   \rtwocell^{H}_{K}{\tau}&\mathcal{E}   \\}\]   
Here $\mathcal{C}$, $\mathcal{D}$ and $\mathcal{E}$ are $0$-cells (i.e strict $\omega$-categories), $F$, $G$, $H$ and $K$ are 
   $1$-cells (i.e strict $\omega$-functors) and $\tau$ and $\sigma$ are $n$-cells (i.e strict $n$-transformations).
 This picture describes $\tau$ and $\sigma$ with $2$-cells, but the reader must see them as $n$-cells. $\tau$ and $\sigma$ are such that : 
 $s^{n}_{0}(\sigma)=\mathcal{C}$, $t^{n}_{0}(\sigma)=s^{n}_{0}(\tau)=\mathcal{D}$, and $t^{n}_{0}(\tau)=\mathcal{E}$.  
 If $a\in \mathcal{C}(0)$, then $\xymatrix{F^{0}\ar[r]^{\tau(a)}&G^{0}}$ is an $(n-1)$-cells of $\mathcal{D}$ and
   it induces the following commutative square of $(n-1)$-cells in $\mathcal{E}$
   
   \[\xymatrix{H^{0}(F^{0}(a)) \ar[d]_{\sigma(F^{0})}\ar[rr]^{H^{n-1}(\tau(a))}&& 
   H^{0}(G^{0}(a))\ar[d]^{\sigma(G^{0})}\\
   K^{0}(F^{0}(a))\ar[rr]_{K^{n-1}(\tau(a))}&& K^{0}(G^{0}(a))}\]   
 which gives 
    \begin{align*}
    (\sigma\circ^{n}_{0}\tau)(a) &=\sigma(G_{0}(a))\circ^{n-1}_{0}H_{n-1}(\tau(a))\\
                                              &=K_{n-1}(\tau(a))\circ^{n-1}_{0}\sigma(F_{0}(a))
  \end{align*}    
and this gives the two principal $(n-1)$-cells of $B^{n}\underset{B^{0}}\sqcup B^{n}$ that we need:
  \begin{align*}
& \gamma^{n-1}(\mu^{n-1}_{0};\gamma(\sigma;G^{0})\ast^{n-1}_{0}\gamma(H^{n-1};\tau))\\
& \qquad \text{and}\\
& \qquad\qquad \gamma^{n-1}(\mu^{n-1}_{0};\gamma(K^{n-1};\tau)\ast^{n-1}_{0}\gamma(\sigma;F^{0}))
\end{align*}  
 Then we have two choices of 
     \[\xymatrix{C^{n}\ar[rr]^(.4){\mu^{n}_{o}}&&B^{n}\underset{B^{p}}\sqcup B^{n}}\] 
which send the principal cell $\tau$ of $C^{n}$ to      
   $\gamma^{n-1}(\mu^{n-1}_{0};\gamma(\sigma;G^{0})\ast^{n-1}_{0}\gamma(H^{n-1};\tau))$ or on   
   $\gamma^{n-1}(\mu^{n-1}_{0};\gamma(K^{n-1};\tau)\ast^{n-1}_{0}\gamma(\sigma;F^{0}))$, and for both
   cases, which send for each $i\in \mathbb{N}$, the     
     $i$-cell $F^{i}$ of $C^{n}$ to the $i$-cell $\gamma(F_{i};H_{i})$ of $B^{n}\underset{B^{0}}\sqcup B^{n}$, and the 
  $i$-cell $G^{i}$ of $C^{n}$ to the $i$-cell $\gamma(G_{i};K_{i})$ of $B^{n}\underset{B^{0}}\sqcup B^{n}$. These morphisms 
  of  $\TG_{p,c}$ are boundary preserving in an evident sense. 
  
  Now let us come back to the specific case of the coglobular complex $B^{\bullet}_{S_{u}}$, or $B^{\bullet}_{C}$ in $\TC_c$.
 Suppose we accept the fractality hypothesis (see section \ref{Hypothesis}) 
  for the $\omega$-operads $Coend(B^{\bullet}_{P})$, where $P$ can be either $S_{u}$, or $C$. 
  In that case, thanks to the universal property of $\eta^{n}$, we get the following unique morphisms of $\omega$-operads $\mu^{n}_{p}$
  and $\mu^{n}_{0}$ (the dotted arrows)
       
     \[\xymatrix{B^{n}\ar@{.>}[rr]^{\mu^{n}_{p}}&&B^{n}\underset{B^{p}}\sqcup B^{n}\\
     C^{n}\ar[u]^{\eta^{n}} \ar[rru]_{\mu^{n}_{p}} }\qquad\xymatrix{B^{n}\ar@{.>}[rr]^{\mu^{n}_{0}}&&B^{n}\underset{B^{0}}\sqcup B^{n}\\
     C^{n}\ar[u]^{\eta^{n}} \ar[rru]_{\mu^{n}_{0}} }\]  
With the identity morphisms of operads $\xymatrix{B^{n}\ar[r]^{1_{B^{n}}}&B^{n}}$ 
  
   \[\xymatrix@R-23pt{C^{0}\ar[r]^(.37){c_{w}}&Coend(B^{\bullet})\\
     \mu^{n}_{p}\ar@{|->}[r]& \mu^{n}_{p}\\
     u_{n}\ar@{|->}[r]& 1_{B^{n}}
     }\] 
we thus have the following conclusion :
\begin{proposition}
\label{proposition-system}
The $\omega$-operads of coendomorphisms $Coend(B^{\bullet}_{S_{u}})$ and $Coend(B^{\bullet}_{C})$ have composition systems.
\end{proposition}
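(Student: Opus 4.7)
The plan is to give an explicit definition of the candidate morphism $c_w : C^0 \to Coend(B^{\bullet})$ in $\TG_{p,c,1}$ and then verify, with the help of the contractibility hypothesis, that all the globular and pointing data match. The morphism is already described in the display preceding the statement: send each composition symbol $\mu^n_p \in C^0(n)$ to the operad morphism $\mu^n_p : B^n \to B^n \underset{B^p}\sqcup B^n$ lifted from the preceding section, and each unit symbol $u_n$ to the identity $1_{B^n} : B^n \to B^n$. So what remains is to explain why this data constitutes a legitimate morphism in $\TG_{p,c,1}$.

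First, I would use \cref{Hypothesis} to identify the pushouts. By construction of $B^{\bullet}_P$ as the image of the coglobular complex $C^{\bullet}$ under $P$, and by the canonical $MG$-functor $\T\text{ree} \to Cospan(P\TC_c)$, the wide pushout $B^n \underset{B^p}\sqcup B^n$ coincides with $B^t_P$ for the tree $t = 1(n) \star^n_p 1(n)$. Hypothesis \ref{Hypothesis} guarantees that $B^t_P \in P\TC_c$, so the explicit morphism in $\TG_{p,c}$ from $C^n$ to $B^n \underset{B^p}\sqcup B^n$ described in \cref{Composition_Systems} lifts uniquely along $\eta^n$ to an operad morphism $\mu^n_p : B^n \to B^n \underset{B^p}\sqcup B^n$ in $\TC_c$. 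By definition of $Coend(B^{\bullet})$ and of $HOM$, such an operad morphism (assembled together with its restrictions to lower $B^k$ via the cosource/cotarget legs of the cospans $B^n$ and $B^t$) is exactly an element of $HOM(B^n,B^t)_n$, i.e.\ an $n$-cell of $Coend(B^{\bullet})$ of arity $t = 1(n) \star^n_p 1(n)$. Likewise $1_{B^n}$ is an $n$-cell of arity $1(n)$.

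Next I would verify the $\TG_{p,c,1}$-axioms for $c_w$. The arity map in $C^0$ assigns $d^0(\mu^n_p) = 1(n) \star^n_p 1(n)$ and $d^0(u_n) = 1(n)$; this matches the tree coordinates computed above, so $c_w$ commutes with $d$ over $\T(1)$. For sources and targets, in $C^0$ one has $s^n_{n-1}(\mu^n_p) = t^n_{n-1}(\mu^n_p) = \mu^{n-1}_p$ when $p < n-1$ and $= u_{n-1}$ when $p = n-1$, and similarly $s,t(u_n) = u_{n-1}$; on the $Coend(B^{\bullet})$ side the boundaries of the operad morphism $\mu^n_p$ are its pre- and post-compositions with the cosource/cotarget maps of the cospans $B^n$ and $B^t$, which by the explicit combinatorial recipe in \cref{Composition_Systems} reduce to $\mu^{n-1}_p$ (respectively to $1_{B^{n-1}}$). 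Pointing preservation is the assignment $\lambda^0(1(m)) = u_m \mapsto 1_{B^m}$, which is the canonical pointing of $Coend(B^{\bullet})$ coming from the operadic units.

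The two cases $P = S_u$ and $P = C$ are entirely parallel, since the argument only uses that $B^t_P$ belongs to $P\TC_c$, which Hypothesis \ref{Hypothesis} supplies in both cases. The main friction point I anticipate is the bookkeeping in the $p = 0$ case: there the definition of $\mu^n_0$ is genuinely asymmetric (one must pick between the two descriptions $\gamma^{n-1}(\mu^{n-1}_0;\gamma(\sigma;G^0)\ast^{n-1}_0\gamma(H^{n-1};\tau))$ and $\gamma^{n-1}(\mu^{n-1}_0;\gamma(K^{n-1};\tau)\ast^{n-1}_0\gamma(\sigma;F^0))$), and one must verify that the chosen lift is still globularly compatible with the legs $\delta^0_n, \kappa^0_n$ of the cospans $B^n$ so that it really determines an element of $HOM(B^n,B^t)_n$ rather than only of the top component. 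Once this is settled, the globularity identities $ss = st$, $ts = tt$ required of a morphism into a $\T$-graph follow formally from the corresponding identities inside $C^0$.
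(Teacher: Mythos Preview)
Your proposal is correct and follows essentially the same route as the paper: the proof of the proposition \emph{is} the construction carried out in \cref{Composition_Systems}, namely defining the explicit maps $C^{n}\to B^{n}\underset{B^{p}}\sqcup B^{n}$ in $\TG_{p,c}$, invoking the contractibility hypothesis so that these lift along $\eta^{n}$ to operad maps $\mu^{n}_{p}$, and then assembling $c_{w}:C^{0}\to Coend(B^{\bullet})$ by $\mu^{n}_{p}\mapsto\mu^{n}_{p}$, $u_{n}\mapsto 1_{B^{n}}$. You are somewhat more explicit than the paper in checking that $c_{w}$ really is a morphism of pointed $\T$-graphs (arities, sources/targets, pointings) and that each $\mu^{n}_{p}$ genuinely lands in $HOM(B^{n},B^{t})_{n}$ rather than just giving the top component; the paper summarises this as ``boundary preserving in an evident sense'', and your flagged friction point in the $p=0$ case is exactly the place where that evidence needs to be supplied.
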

  
\subsection{The strict $\omega$-category of strict $\omega$-categories} 
     \label{The_Strict_omega_category_of_the_Strict_omega_categories}   
     
 Consider the case $P=S_{u}$ ("Strict with contractible units"), i.e we deal with the category $S_{u}\TC_c$ of strict $\omega$-operads 
  with contractible units (see section \ref{Weak_contractibility_and_strict_contractibility}). The coglobular complex $B^{\bullet}_{S_{u}}$ of the section
  \ref{Contractibility_Hypothesis} produces the following globular complex in $\C$
      
    \[\xymatrix{\ar@{.>}[r]<+2pt>^{}\ar@{.>}[r]<-2pt>_{}
  &B_{S_{u}}^{n}\text{-}\mathbb{A}lg\ar[r]<+2pt>^{\sigma^{n}_{n-1}}\ar[r]<-2pt>_{\beta^{n}_{n-1}}
  &B_{S_{u}}^{n-1}\text{-}\mathbb{A}lg\ar@{.>}[r]<+2pt>^{}\ar@{.>}[r]<-2pt>_{}
  &B_{S_{u}}^{1}\text{-}\mathbb{A}lg\ar[r]<+2pt>^{\sigma^{1}_{0}}\ar[r]<-2pt>_{\beta^{1}_{0}}
  &B_{S_{u}}^{0}\text{-}\mathbb{A}lg }\]
 Also it is possible to prove the following proposition :
\begin{proposition}
    Objects of $B_{S_{u}}^{1}\text{-}\mathbb{A}lg$ are strict $\omega$-functors, and for each integer $n\geqslant 2$,
   objects of $B_{S_{u}}^{n}\text{-}\mathbb{A}lg$ are strict $n$-transformations.
\end{proposition}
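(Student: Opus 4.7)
The plan is to identify algebras of $B_{S_u}^n$ by exploiting the freeness of $B_{S_u}^n=S_u(C^n)$ together with the rigidity imposed by strict contractibility with contractible units. The strategy is to construct, for each $n$, an explicit strict-with-contractible-units $\omega$-operad $E^n$ whose algebras are manifestly the claimed structures, and then to prove $B_{S_u}^n\cong E^n$ by matching their universal properties.

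First, I would construct $E^n$ as follows. For $n=1$, take the operad whose algebras are triples $(A,B,F)$ with $A,B$ strict $\omega$-categories and $F\colon A\to B$ a strict $\omega$-functor; concretely this is the endomorphism operad of the generic such triple, living in $\TC_c$ over the constant $\omega$-graph $1\sqcup 2$. For $n\geqslant 2$, take the analogous endomorphism operad classifying the generic datum of two strict $\omega$-categories, two strict $\omega$-functors, and a strict $n$-transformation between them. In both cases I would verify (i) strict contractibility, by checking that any two parallel formal composites of operations with identical arity in $\T(1\sqcup 2)$ are forced equal by strict associativity, strict interchange, strict functoriality, and strict naturality; and (ii) contractible units, by checking that strict $\omega$-functors and strict $n$-transformations are compatible with identities at every level. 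This verification uses the well-known fact that strict $\omega$-categories admit a unique normal form for any pasting composite.

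Second, I would use the universal properties to obtain mutually inverse maps. Since $E^n$ lies in $S_u\TC_c$ and contains operations realizing every generator of $C^n$ (composition cells $\mu^m_p, u_m, \nu^m_p, v_m$, functor cells $F^m$ or $\alpha^m_0,\beta^m_0$, and the transformation cells $\xi_n, \alpha_p, \beta_p$), there is a map $C^n\to V U_{S_u}(E^n)$ in $\TG_{p,c}$ which, via the adjunctions $S_u=F_{S_u}\circ M$ with $U_{S_u}\circ V$, extends uniquely to $B_{S_u}^n\to E^n$ in $S_u\TC_c$. Conversely, $B_{S_u}^n$ itself contains avatars of every generating operation of $E^n$ and satisfies the strict-with-contractible-units axioms, so the universal property of $E^n$ (as the free such operad on the corresponding generators) produces the inverse map. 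Uniqueness on generators then gives $B_{S_u}^n\cong E^n$, and hence the identification of their categories of algebras.

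The main obstacle is the strict contractibility verification for $E^n$: one must show that, after freely imposing strict $\omega$-category structure on each color plus strict functoriality of $F^m$ (or $\alpha^m_0,\beta^m_0$) and strict naturality of $\xi_n$ together with its boundary data $\alpha_p,\beta_p$, every two parallel operations of matching arity become equal. For $n=1$ this reduces to the normal-form theorem for strict $\omega$-functors between strict $\omega$-categories. For $n\geqslant 2$ the argument is more delicate because the higher cells $\alpha_p,\beta_p$ have arities of the form $1^{0}_{p}(1(0))$, so one must track how such \emph{root cells} interact with the two color-indexed strict $\omega$-category structures under composition; the loop property together with strict naturality forces the required collapses. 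Once this combinatorial core is in place, the remaining identifications are routine: contractible units match preservation of identities, and the pointing $\lambda^n$ together with the $\T$-category axioms give exactly the source/target compatibilities defining strict $n$-transformations.
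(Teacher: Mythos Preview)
The paper does not actually supply a proof of this proposition: it only says ``Also it is possible to prove the following proposition'' and then states it. So there is no argument in the paper to compare your attempt against; your proposal is effectively a sketch of a proof the author omitted.

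Your overall strategy --- build a concrete operad $E^n$ whose algebras are visibly strict $\omega$-functors (resp.\ strict $n$-transformations), then identify it with $B_{S_u}^n$ via universal properties --- is reasonable and is the natural way to fill the gap. The forward map $B_{S_u}^n\to E^n$ is correctly obtained from freeness of $B_{S_u}^n=S_u(C^n)$ once you check $E^n$ is strictly contractible with contractible units and receives a map from $C^n$.

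There is, however, a genuine gap in your construction of the \emph{inverse} map. You write that ``the universal property of $E^n$ (as the free such operad on the corresponding generators) produces the inverse map'', but you introduced $E^n$ as an endomorphism operad of a generic datum, not as a free object; such an $E^n$ has no a priori universal property of the kind you invoke. Asserting that $E^n$ is free on those generators is precisely the statement $E^n\cong B_{S_u}^n$ you are trying to prove, so the argument as written is circular. A workable repair is to show directly that the unique map $\varphi\colon B_{S_u}^n\to E^n$ is bijective: surjectivity because every operation on strict $\omega$-categories/functors/$n$-transformations is a composite of the generators coming from $C^n$ (this is the normal-form statement you already flagged); injectivity by induction on dimension, using that $B_{S_u}^n$ is itself strictly contractible --- if $\varphi(x)=\varphi(y)$ then $x,y$ are parallel with equal arity (and satisfy the loop condition in the root-cell case because $\varphi$ respects $0$-sources and $0$-targets), hence $x=y$. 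With this correction your plan goes through; the combinatorial core you identified (the strict contractibility verification for $E^n$, especially the handling of root cells for $n\geqslant 2$) is indeed where the real work lies.
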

The standard action of the coglobular complex $B^{\bullet}_{S_{u}}$ is given by the following diagram in $\TC_1$
   
   \[\xymatrix{Coend(B^{\bullet}_{S_{u}})
   \ar[rr]^{Coend(\mathbb{A}lg(.))}&& 
 Coend(A_{S_{u}}^{op})\ar[rr]^{Coend(Ob(.))}&&End(A_{0,S_{u}})}\]
It is an other specific standard action of the higher transformations. 
The monochromatic $\omega$-operad of coendomorphism $Coend(B^{\bullet}_{S_{u}})$ plays a central role
 for strict $\omega$-categories. We call it the \textit{indigo operad}\footnote{We use Newton's $7$ primary colours
of the Rainbow to denote $4$ relevant $\omega$-operads of coendomorphism of this article. We don't mention in this article the
\textit{red operad}, the \textit{orange operad}, and the \textit{blue operad}, which are respectivally specific for 
$\omega$-graphs,
reflexive $\omega$-graphs, and semi-strict $\omega$-categories, because they are very similar to others monochromatic $\omega$-operad of coendomorphism
of this article, and we don't need them to reached the main idea of this article.}. According to the hypotheses \ref{Hypothesis}, the indigo operad has 
 a composition system (see the proposition \ref{proposition-system}) and is strictly contractible with contractible units. 
 Thus we have a unique morphism in $\TC_{1}$
  
 \[\xymatrix{ B^{0}_{S_{u}}\ar[rr]^(.4){!_{s}}&&Coend(B^{\bullet}_{S_{u}}) }\] 
and we obtain a morphism of $\omega$-operads
 
      \[\xymatrix{ B^{0}_{S_{u}}\ar[rrr]^(.4){\mathfrak{S}_{u}}&&&End(A_{0,S_{u}})}\] 
which expresses an action of the $\omega$-operad  $B^{0}_{S_{u}}$ of 
 strict $\omega$-categories on the globular complex 
 $B^{\bullet}_{S_{u}}$-$\mathbb{A}lg(0)$ in $SET$
 of strict higher transformations, and thus gives a
    strict $\omega$-category structure on strict higher transformations.
    
\subsection{The weak $\omega$-category of weak $\omega$-categories} 
   \label{The_Weak_omega_category_of_the_Weak_omega_categories}

 Consider the case $P=C$ ("Contractible"), i.e we deal with the category $C\TC$ of contractible $\omega$-operads 
 (see \ref{Weak_contractibility_and_strict_contractibility}). The coglobular complex $B^{\bullet}_{C}$ of section
  \ref{Contractibility_Hypothesis} produces the following globular complex in $\C$
      
    \[\xymatrix{\ar@{.>}[r]<+2pt>^{}\ar@{.>}[r]<-2pt>_{}
  &B_{C}^{n}\text{-}\mathbb{A}lg\ar[r]<+2pt>^{\sigma^{n}_{n-1}}\ar[r]<-2pt>_{\beta^{n}_{n-1}}
  &B_{C}^{n-1}\text{-}\mathbb{A}lg\ar@{.>}[r]<+2pt>^{}\ar@{.>}[r]<-2pt>_{}
  &B_{C}^{1}\text{-}\mathbb{A}lg\ar[r]<+2pt>^{\sigma^{1}_{0}}\ar[r]<-2pt>_{\beta^{1}_{0}}
  &B_{C}^{0}\text{-}\mathbb{A}lg }\]      
and in the article \cite{kach:nscellsfinal} it was proved, with the old notion of contractibility, that we have the proposition
\begin{proposition} 
Algebras of dimension $2$ of $B_{C}^{1}\text{-}\mathbb{A}lg$ are pseudo-$2$-functors, and 
algebras of dimension $2$ of $B_{C}^{2}\text{-}\mathbb{A}lg$ are pseudo-$2$-natural transformations.
\end{proposition}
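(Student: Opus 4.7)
The plan is to unpack what a dimension-$2$ algebra of a $2$-coloured $\omega$-operad means concretely and then match the resulting data with the standard definitions of pseudo-$2$-functor and pseudo-$2$-natural transformation. First I would fix the truncation convention: by a dimension-$2$ algebra of $B^{n}_{C}$ I mean an algebra whose underlying $2$-coloured $\omega$-graph $W$ is $2$-truncated (cells above dimension $2$ are identities of their boundaries) and on which the action $B^{n}_{C}\longrightarrow \mathrm{End}(W)$ factors through this truncation. Under the colour assignment $\{1,2\}$, such an algebra provides two underlying $2$-truncated globular sets $W_{1},W_{2}$, together with operations indexed by the cells of $B^{n}_{C}$ whose arities lie in the relevant colour.

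For $B^{1}_{C}$, I would first isolate the monochromatic sub-operads generated by the symbols $\mu^{m}_{p},u_{m}$ (of colour $1$) and $\nu^{m}_{p},v_{m}$ (of colour $2$) together with the coherence cells produced by contractibility over each fibre. By \cref{Hypothesis} (or directly for the $2$-truncation, which is small enough to verify by hand) these act on $W_{1}$ and $W_{2}$ as Batanin weak $2$-categories, which up to truncation coincide with bicategories. The bridging symbols $F^{m}$ (arity $1(m)$, codomain colour $2$) give a map of underlying $2$-graphs $F\colon W_{1}\longrightarrow W_{2}$. Contractibility applied to pairs of parallel $1$-cells with the same arity, namely to $\gamma(F^{1};\mu^{1}_{0}(f,g))$ and $\gamma(\mu^{1}_{0};F^{1}(f)\star^{1}_{0}F^{1}(g))$, and to $\gamma(F^{1};u_{1}(x))$ and $u_{1}(F^{0}(x))$, produces the compositor $\phi_{g,f}\colon F(g\circ f)\simeq F(g)\circ F(f)$ and the unitor $\phi_{x}\colon F(1_{x})\simeq 1_{F(x)}$; the loop property is automatic because both members of each such pair have the same $0$-source and $0$-target. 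The step I expect to be the main obstacle is the coherence verification: one must show that the associativity pentagon and the unit triangles for a pseudo-$2$-functor hold. These correspond to pairs of parallel $3$-cells of $B^{1}_{C}$, and in the $2$-truncation the corresponding $3$-cell of $W_{2}$ is forced to be an identity; so the coherence axioms reduce to the statement that the two composite $2$-cells are equal in the bicategory $W_{2}$. This is exactly where the new contractibility (root cells plus the loop property) is used to rule out the spurious pairs which produced Joyal's counterexample.

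For $B^{2}_{C}$ the argument is parallel. The restriction of the action to the sub-operads generated by $(\mu,u)$, $(\nu,v)$, and by each of $F^{m}$, $H^{m}$ separately endows $W_{1},W_{2}$ with bicategory structures and equips $F,H\colon W_{1}\longrightarrow W_{2}$ with pseudo-$2$-functor structures by the previous paragraph. The new symbol $\tau$, of arity $1^{0}_{1}(1(0))$ and codomain colour $2$, is a root $1$-cell, and its image under the action provides a family $\tau_{x}\colon F(x)\longrightarrow H(x)$ indexed by $0$-cells $x$ of $W_{1}$. Applying contractibility in $B^{2}_{C}$ to the parallel pair $\gamma(\nu^{1}_{0};\tau\star^{1}_{0}\gamma(H^{1};\cdot))$ and $\gamma(\nu^{1}_{0};\gamma(F^{1};\cdot)\star^{1}_{0}\tau)$ (both root cells with the same $0$-source $F^{0}$ and $0$-target $H^{0}$, hence the loop property holds) produces the naturality $2$-cell $\tau_{f}\colon H(f)\circ\tau_{x}\simeq \tau_{y}\circ F(f)$ for each $1$-cell $f\colon x\longrightarrow y$. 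The pseudo-naturality coherence axioms again come from the $2$-truncation of parallel $3$-cells: because $W_{2}$ is $2$-truncated, the operadic equations between the $3$-cells produced by contractibility become equations between $2$-cells in $W_{2}$, and these are exactly the coherence axioms for a pseudo-$2$-natural transformation.

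Finally I would check that the translation is a bijection (on algebra structures): every pseudo-$2$-functor, respectively pseudo-$2$-natural transformation, arises this way. For the forward direction, given a pseudo-$2$-functor $F$ between bicategories, one extends the standard composition-system action of $B^{0}_{C}$ on each $W_{i}$ by sending $F^{0},F^{1},F^{2}$ to $F$ and its coherence data, and uses initiality of $B^{1}_{C}$ among contractible $\omega$-operads equipped with a composition system and the symbol $F^{\bullet}$ to extend uniquely; the analogous argument works for $B^{2}_{C}$ with $\tau$. The hard part remains the coherence verification in the previous paragraphs; once that is done, uniqueness of the extension is formal from the universal property described in \cref{Contractibility_Hypothesis}.
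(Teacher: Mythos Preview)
The paper does not actually supply a proof of this proposition: it simply asserts that ``the proof is exactly the same as the proof in the article \cite{kach:nscellsfinal}'' and moves on. So there is nothing in the present paper to compare your argument against beyond the observation that your unpacking strategy---identify the two monochromatic copies of $B^{0}_{C}$ acting on $W_{1}$ and $W_{2}$ as bicategories, read off the bridging data from $F^{m}$ (resp.\ $F^{m},H^{m},\tau$), and extract the coherence isomorphisms from contractibility, with the axioms forced by $2$-truncation of parallel $3$-cells---is precisely the shape of argument one expects and presumably the one carried out in the cited reference.

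A few local corrections are worth making before you write this up. In your compositor pair you have $\gamma(\mu^{1}_{0};F^{1}\star^{1}_{0}F^{1})$, but since $F^{1}$ outputs colour $2$ this must use the colour-$2$ composition symbol $\nu^{1}_{0}$, not $\mu^{1}_{0}$; similarly the unitor pair should compare $\gamma(F^{1};u_{1})$ against a cell built with $v_{1}$, and the arguments ``$(f,g)$'' and ``$(x)$'' should not appear inside the operad cells themselves. Also, your invocation of the loop property for the compositor and unitor pairs is unnecessary: those cells have arity $1(1)\star^{1}_{0}1(1)$ and $1(1)$ respectively, hence are not root cells, so ordinary parallelism plus equal arity already puts them in $\tilde{C}(1)$. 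The loop property only enters for the genuinely root cells built from $\tau$ in the $B^{2}_{C}$ case, and there your check that the $0$-source and $0$-target agree (both being $F^{0}$ and $H^{0}$) is the right one. Finally, your appeal to \cref{Hypothesis} for the bicategory structures on $W_{1},W_{2}$ is misplaced: that hypothesis concerns the pushout operads $B^{t}_{C}$, whereas here you only need the classical fact that $2$-truncated $B^{0}_{C}$-algebras are bicategories, which is Batanin's original result and does not depend on the hypothesis.
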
  
However, with our new notion of contractibility (see section \ref{Weak_contractibility_and_strict_contractibility}), this proposition remains true, and the 
proof is exactly the same as the proof in the article \cite{kach:nscellsfinal}. The standard action of the coglobular complex $B^{\bullet}_{C}$ is 
given by the following diagram in $\TC_1$
 
   \[\xymatrix{Coend(B^{\bullet}_{C})
   \ar[rr]^{Coend(\mathbb{A}lg(.))}&& 
 Coend(A_{S}^{op})\ar[rr]^{Coend(Ob(.))}&&End(A_{0,C})}\]
It is an other specific standard action of the higher transformations. The monochromatic $\omega$-operad of coendomorphism 
$Coend(B^{\bullet}_{C})$ plays a central role for weak $\omega$-categories. We call it the \textit{violet operad}. Batanin's 
$\omega$-operad $B^{0}_{C}$ of weak $\omega$-categories is initial among contractible $\omega$-operads which 
 have a composition system. According to the contractibility hypothesis \ref{Hypothesis}, the violet operad has a composition system 
 (see proposition \ref{proposition-system}) and is contractible. Thus we have a unique morphism in $\TC_{1}$
  
 \[\xymatrix{ B^{0}_{C}\ar[rr]^{!_{C}}&&Coend(B^{\bullet}_{C}) }\] 
 and we obtain a morphism of $\omega$-operads
 
      \[\xymatrix{ B^{0}_{C}\ar[rrr]^(.4){\mathfrak{C}}&&&End(A_{0,C})}\] 
which expresses an action of the $\omega$-operad  $B^{0}_{C}$ of weak $\omega$-categories on the globular complex 
 $B^{\bullet}_{C}$-$\mathbb{A}lg(0)$ in $SET$ of weak higher transformations, and thus gives a
  weak $\omega$-category structure on the weak higher transformations. It is not difficult to prove that under this weak  $\omega$-category structure
  on $B^{\bullet}_{C}$-$\mathbb{A}lg(0)$, the composition of weak  $\omega$-functors is associative up to weak natural $\omega$-transformations. 
      
\section{Examples of $\omega$-operads with fractal property} 
\label{Examples_of_Standard_Actions}
     
  Consider now the case $P=Id$ ("Magmatic"), i.e we deal with the category $\TC_c$ of $\omega$-operads (see section\ref{T-graphs_with_contractible_units}).
  We apply the free functor (see section \ref{T-graphs_with_contractible_units})
    
   \[\xymatrix{\TG_{p,c}\ar[rrr]^{M}&&& \TC_c}\]      
to the coglobular complex of the higher transformations $C^{\bullet}$ in $\TG_{p,c}$
and we obtain a coglobular complex $B^{\bullet}_{Id}$ of $\omega$-operads in $\TC_{c}$
    
   \[\xymatrix{B_{Id}^{0}\ar[rr]<+2pt>^{\delta^{1}_{0}}\ar[rr]<-2pt>_{\kappa^{1}_{0}}
  &&B_{Id}^{1}\ar[rr]<+2pt>^{\delta^{1}_{2}}\ar[rr]<-2pt>_{\kappa^{2}_{1}}
  &&B_{Id}^{2}\ar@{.>}[r]<+2pt>^{}\ar@{.>}[r]<-2pt>_{}
  &B_{Id}^{n-1}\ar[rr]<+2pt>^{\delta^{n}_{n-1}}\ar[rr]<-2pt>_{\kappa^{n}_{n-1}}
  &&B_{Id}^{n}\ar@{.}[r]<+2pt>\ar@{.}[r]<-2pt>&}\]    
which produces the following globular complex in $\mathbb{C}AT$.    
    
    \[\xymatrix{\ar@{.>}[r]<+2pt>^{}\ar@{.>}[r]<-2pt>_{}
  &B_{Id}^{n}\text{-}\mathbb{A}lg\ar[r]<+2pt>^{\sigma^{n}_{n-1}}\ar[r]<-2pt>_{\beta^{n}_{n-1}}
  &B_{Id}^{n-1}\text{-}\mathbb{A}lg\ar@{.>}[r]<+2pt>^{}\ar@{.>}[r]<-2pt>_{}
  &B_{Id}^{1}\text{-}\mathbb{A}lg\ar[r]<+2pt>^{\sigma^{1}_{0}}\ar[r]<-2pt>_{\beta^{1}_{0}}
  &B_{Id}^{0}\text{-}\mathbb{A}lg }\]    
In particular $B^{0}_{Id}$ is the $\omega$-operad for $\omega$-magmas (see \cite{kach:infn}).
The standard action associated to $B^{\bullet}_{Id}$ is given by the following diagram in $\TC_1$
 
   \[\xymatrix{Coend(B^{\bullet}_{Id})
   \ar[rr]^{Coend(\mathbb{A}lg(.))}&& 
 Coend(A_{Id}^{op})\ar[rr]^{Coend(Ob(.))}&&End(A_{0,Id}) }\]
that we call the standard action of $\omega$-magmas, thus which is a specific
 standard action of higher transformations. The monochromatic $\omega$-operad $Coend(B^{\bullet}_{Id})$ of coendomorphism plays a central role
 for $\omega$-magmas. We call it the \textit{yellow operad}. Also we have the following proposition
\begin{proposition}
\label{proposition-fractality-BM}
$B^{0}_{Id}$ has the fractal property.
\end{proposition}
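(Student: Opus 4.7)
The plan is to exploit the fact that, by construction, $B^{0}_{Id}=M(C^{0})$ is the free $\omega$-operad on the pointed $\T$-graph $C^{0}$. Under the adjunction $M\dashv V$ introduced in section \ref{Some_standard_diagrams_of_the_omega-transformations}, producing a morphism of $\omega$-operads $B^{0}_{Id}\longrightarrow Coend(B^{\bullet}_{Id})$ is equivalent to producing a morphism of pointed $\T$-graphs $c_{w}\colon C^{0}\longrightarrow V\bigl(Coend(B^{\bullet}_{Id})\bigr)$, that is, to equipping the yellow operad with a composition system in the sense of section \ref{Composition_Systems}. The essential point, in contrast with the cases $P=S_{u}$ and $P=C$ of section \ref{Contractibility_Hypothesis}, is that no contractibility hypothesis is needed here, because the universal property at work is simply the freeness of $M$.

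I would first construct, for every pair of integers $0\leqslant p<n$, a morphism of $\omega$-operads
\[\mu^{n}_{p}\colon B^{n}_{Id}\longrightarrow B^{n}_{Id}\underset{B^{p}_{Id}}{\sqcup}B^{n}_{Id}\]
playing the role of the operadic $\circ^{n}_{p}$-composition. Since each $B^{n}_{Id}=M(C^{n})$ is free, by the same adjunction it suffices to give a morphism of pointed $\T$-graphs out of $C^{n}$ with the required target, and I would use verbatim the formulas already written down in section \ref{Composition_Systems}: for $p>0$ the principal cell $\tau$ of $C^{n}$ is sent to $\gamma(\mu^{n-1}_{p-1};\sigma\ast^{n-1}_{p-1}\tau)$, for $p=0$ one of the two admissible cells $\gamma^{n-1}(\mu^{n-1}_{0};\gamma(\sigma;G^{0})\ast^{n-1}_{0}\gamma(H^{n-1};\tau))$ is chosen once for all, and the cells $F^{i},G^{i}\in C^{n}$ are sent to the appropriate composites in the pushout. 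The verification that these assignments really do define a morphism in $\TG_{p,c}$ is elementary: it amounts to a check of sources, targets, arities and pointings on the finite family of generators of $C^{n}$.

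Next I would verify that for each $(n,p)$ the morphism $\mu^{n}_{p}$ is serially compatible with the coface maps $\delta^{n}_{n-1},\kappa^{n}_{n-1}$ of $B^{\bullet}_{Id}$, so that $\mu^{n}_{p}$ is a bona fide $n$-cell of $Coend(B^{\bullet}_{Id})$ of arity $1(n)\star^{n}_{p}1(n)$. Because both sides are free $\omega$-operads on known pointed $\T$-graphs, this serial commutativity reduces to an equality of maps in $\TG_{p,c}$ on the generators of the relevant $C^{k}$, which follows directly from the formulas above. Setting $c_{w}(\mu^{n}_{p}):=\mu^{n}_{p}$ and $c_{w}(u_{n}):=1_{B^{n}_{Id}}$ then defines the composition system $c_{w}$, whose transpose under $M\dashv V$ is the desired morphism $!_{Id}\colon B^{0}_{Id}\longrightarrow Coend(B^{\bullet}_{Id})$ witnessing the fractal property. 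In this magmatic setting there is no genuine obstacle: the only real work is the bookkeeping check of serial compatibility, and it goes through precisely because the freeness of each $B^{n}_{Id}=M(C^{n})$ removes the need for the contractibility machinery that had to be hypothesised in the $S_{u}$ and $C$ cases.
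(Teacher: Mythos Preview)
Your proposal is correct and matches the paper's intended argument. The paper does not actually supply a written proof for this proposition; it states it and moves on. Your outline is precisely the argument the paper's framework sets up: since $B^{0}_{Id}=M(C^{0})$ is free, the adjunction $M\dashv V$ reduces the fractal property to exhibiting a composition system on the yellow operad, and the formulas of section~\ref{Composition_Systems} furnish exactly that, with the freeness of each $B^{n}_{Id}=M(C^{n})$ replacing the contractibility hypothesis that was needed for $P=S_{u}$ and $P=C$. The one point worth making explicit in your write-up is that the ``serial compatibility'' check you mention is precisely the verification that $c_{w}$ respects sources and targets as a map of $\omega$-graphs (i.e.\ that the $(n-1)$-source of the $n$-cell $\mu^{n}_{p}$ in $Coend(B^{\bullet}_{Id})$ is $\mu^{n-1}_{p}$ when $p<n-1$ and $1_{B^{n-1}_{Id}}$ when $p=n-1$); once that is said, the argument is complete.
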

 If we compose the morphism $!_{Id}$ 
 \[\xymatrix{ B^{0}_{Id}\ar[rr]^(.4){!_{Id}}&&Coend(B^{\bullet}_{Id}) }\] 
with the standard action associated to $B^{\bullet}_{Id}$ we obtain a morphism of $\omega$-operads
  
      \[\xymatrix{ B^{0}_{Id}\ar[rrr]^(.4){\mathfrak{I}d}&&&End(A_{0,Id}) }\] 
which expresses an action of the $\omega$-operad  $B^{0}_{Id}$ of the 
 $\omega$-magmas on the globular complex 
 $B^{\bullet}_{Id}$-$\mathbb{A}lg(0)$ in $SET$
  of the $(n,\omega)$-magmas ($n\in\mathbb{N}$), and thus gives an 
    $\omega$-magma structure on the $(n,\omega)$-magmas ($n\in\mathbb{N}$). 
     
 Consider the case $P=Id_{u}$ ("Magmatic with contractible units"), i.e we deal with the category $Id_{u}\TC_c$ of $\omega$-operads with contactible units 
  (see section \ref{T-graphs_with_contractible_units}).
  We apply the free functor (see section \ref{T-graphs_with_contractible_units})
    
   \[\xymatrix{\TG_{p,c}\ar[rrr]^{Id_{u}}&&& Id_{u}\TC_c}\]      
to the coglobular complex of higher transformations $C^{\bullet}$ in $\TG_{p,c}$ 
and we obtain a coglobular complex $B^{\bullet}_{Id_{u}}$ of $\omega$-operads in $\TC_{c}$
    
   \[\xymatrix{B_{Id_{u}}^{0}\ar[rr]<+2pt>^{\delta^{1}_{0}}\ar[rr]<-2pt>_{\kappa^{1}_{0}}
  &&B_{Id_{u}}^{1}\ar[rr]<+2pt>^{\delta^{1}_{2}}\ar[rr]<-2pt>_{\kappa^{2}_{1}}
  &&B_{Id_{u}}^{2}\ar@{.>}[r]<+2pt>^{}\ar@{.>}[r]<-2pt>_{}
  &B_{Id_{u}}^{n-1}\ar[rr]<+2pt>^{\delta^{n}_{n-1}}\ar[rr]<-2pt>_{\kappa^{n}_{n-1}}
  &&B_{Id_{u}}^{n}\ar@{.}[r]<+2pt>\ar@{.}[r]<-2pt>&}\]    
which produces the following globular complex in $\mathbb{C}AT$.    
    
    \[\xymatrix{\ar@{.>}[r]<+2pt>^{}\ar@{.>}[r]<-2pt>_{}
  &B_{Id_{u}}^{n}\text{-}\mathbb{A}lg\ar[r]<+2pt>^{\sigma^{n}_{n-1}}\ar[r]<-2pt>_{\beta^{n}_{n-1}}
  &B_{Id_{u}}^{n-1}\text{-}\mathbb{A}lg\ar@{.>}[r]<+2pt>^{}\ar@{.>}[r]<-2pt>_{}
  &B_{Id_{u}}^{1}\text{-}\mathbb{A}lg\ar[r]<+2pt>^{\sigma^{1}_{0}}\ar[r]<-2pt>_{\beta^{1}_{0}}
  &B_{Id_{u}}^{0}\text{-}\mathbb{A}lg }\]   
 In particular $B^{0}_{Id_{u}}$ is the $\omega$-operad for reflexive $\omega$-magmas (see \cite{kach:infn}).
 The standard action associated to $B^{\bullet}_{Id_{u}}$ is given by the following diagram in $\TC_1$.
   
      \[\xymatrix{Coend(B^{\bullet}_{Id_{u}})
   \ar[rr]^{Coend(\mathbb{A}lg(.))}&& 
 Coend(A_{Id_{u}}^{op})\ar[rr]^{Coend(Ob(.))}&&End(A_{0,Id_{u}})}\]
It is a specific standard action of the higher transformations. The monochromatic $\omega$-operad $Coend(B^{\bullet}_{Id_{u}})$ 
of coendomorphism plays a central role for reflexive $\omega$-magmas. We call it the \textit{green operad}. Also we have the following proposition
\begin{proposition}
$B^{0}_{Id_{u}}$ has the fractal property.
\end{proposition}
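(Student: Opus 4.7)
The plan is to exploit the fact that $B^{0}_{Id_{u}} = Id_{u}(C^{0})$ is, by construction, the free $\omega$-operad with contractible units generated by the pointed $\T$-graph $C^{0}$ (equipped with Batanin's system of composition). Consequently, to exhibit a morphism of $\omega$-operads $B^{0}_{Id_{u}} \to Coend(B^{\bullet}_{Id_{u}})$ it suffices to check that $Coend(B^{\bullet}_{Id_{u}})$ carries the structure of an $\omega$-operad with contractible units and to produce a map of pointed $\T$-graphs $C^{0} \to Coend(B^{\bullet}_{Id_{u}})$ that respects this structure; the universal property of the free functor $Id_{u}$ then delivers the required morphism in $Id_{u}\TC_c$, hence in $\TC_c$.

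First I would verify that $Coend(B^{\bullet}_{Id_{u}})$ has contractible units. The argument is parallel to proposition \ref{proposition-contractible}, but in the $P=Id_{u}$ case it is unconditional: each coloured $\omega$-operad $B^{t}_{Id_{u}}$, being a finite pushout in $Id_{u}\TC_c$ of operads with contractible units, has contractible units automatically. Given an $n$-cell $f_{n}:B^{n}_{Id_{u}}\to B^{t}_{Id_{u}}$ of $Coend(B^{\bullet}_{Id_{u}})$ fitting into the serial commutative diagram of section \ref{omega-Operads_of_Endomorphism_and_Coendomorphism}, I use the contractible units of $B^{t'}_{Id_{u}}$ for the degenerate tree $t'=1^{n}_{n+1}(t)$ to build a canonical unit $(n+1)$-cell lifting $f_{n}$, exactly along the lines of proposition \ref{proposition-contractible}.

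Second I would equip $Coend(B^{\bullet}_{Id_{u}})$ with a composition system, transporting the construction of section \ref{Composition_Systems} to $P=Id_{u}$. The morphisms $\mu^{n}_{p}$ and $\mu^{n}_{0}$ built there in $\TG_{p,c}$ lift, through the universal property of $\eta^{n}:C^{n}\to B^{n}_{Id_{u}}$ in $Id_{u}\TC_c$, to cells of $Coend(B^{\bullet}_{Id_{u}})$ of the correct operadic arity, provided the ambient pushouts $B^{n}_{Id_{u}}\underset{B^{p}_{Id_{u}}}\sqcup B^{n}_{Id_{u}}$ are computed in $Id_{u}\TC_c$.

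Having these two ingredients, I assemble the morphism $C^{0}\to Coend(B^{\bullet}_{Id_{u}})$ in $\TG_{p,c}$ sending $u_{m}$ to the identity $1_{B^{m}_{Id_{u}}}$ and each $\mu^{m}_{p}$ to the corresponding composition cell, exactly as in the definition of $c_{w}$ at the end of section \ref{Composition_Systems}. The universal property of $B^{0}_{Id_{u}}=Id_{u}(C^{0})$ then yields the sought-after morphism $!_{Id_{u}}:B^{0}_{Id_{u}}\to Coend(B^{\bullet}_{Id_{u}})$, which is the fractal property. The main technical obstacle will be confirming that the pushouts used along the way remain inside $Id_{u}\TC_c$, which reduces to the observation that the adjunction $F_{Id_{u}}\dashv U_{Id_{u}}$ endows $Id_{u}\TC_c$ with its own cocomplete structure, and that the pushouts of interest coincide with those in $\TC_c$ after applying $F_{Id_{u}}$ where necessary.
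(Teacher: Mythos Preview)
The paper does not in fact supply a proof of this proposition (nor of the companion proposition~\ref{proposition-fractality-BM} for $B^{0}_{Id}$); both are simply asserted in section~\ref{Examples_of_Standard_Actions}. Your approach is the natural one and is precisely what the paper's framework in sections~\ref{Contractibility_Hypothesis} and~\ref{Composition_Systems} is set up to deliver: show that $Coend(B^{\bullet}_{Id_{u}})$ carries contractible units and a composition system, then invoke the universal property of $B^{0}_{Id_{u}}=Id_{u}(C^{0})$. You also correctly isolate the reason no hypothesis is required here, in contrast with the $S_{u}$ and $C$ cases: the property ``having contractible units'' is preserved under the pushouts $B^{n}_{Id_{u}}\underset{B^{p}_{Id_{u}}}\sqcup B^{n}_{Id_{u}}$ essentially for free, whereas (strict) contractibility is not. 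This matches the paper's remark in the introduction that these two examples enjoy the fractal property ``without requiring any hypotheses''.

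Two small comments. First, your appeal to proposition~\ref{proposition-contractible} as a template is apt in spirit but not in detail: that proposition constructs a \emph{filler} between two given parallel cells using contractibility, whereas here you must construct a \emph{reflexivity} $(n{+}1)$-cell above a single given $n$-cell using contractible units. The actual argument is as you sketch---send the principal cell $\xi_{n+1}\in C^{n+1}$ to $1^{n}_{n+1}(\overline{f}_{n}(\xi_{n}))$ in $B^{t}_{Id_{u}}$---but it is worth being explicit that this is a different lifting problem from the one in proposition~\ref{proposition-contractible}. Second, your closing remark about the pushouts is the right concern, but the resolution is more direct than invoking cocompleteness of $Id_{u}\TC_c$: since $\R$ is a left adjoint, $\R(G_{1}\sqcup G_{2}\sqcup G_{3})\cong\R(G_{1})\sqcup\R(G_{2})\sqcup\R(G_{3})$, and each summand embeds into the pushout via the contractible-unit structure of the corresponding piece, which suffices to equip the pushout in $\TC_c$ with contractible units.
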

If we compose the morphism $!_{Id_{u}}$ 
\[\xymatrix{ B^{0}_{Id_{u}}\ar[rr]^(.4){!_{Id_{u}}}&&Coend(B^{\bullet}_{Id_{u}}) }\] 
with the standard action associated to $B^{\bullet}_{Id_{u}}$ we obtain a morphism of $\omega$-operads

   \[\xymatrix{B^{0}_{Id_{u}}\ar[rrr]^(.4){\mathfrak{C}_u}&&&End(A_{0,Id_{u}})}\] 
which expresses an action of the $\omega$-operad  $B^{0}_{Id_{u}}$ of reflexive
 $\omega$-magmas on the globular complex 
 $B^{\bullet}_{Id_{u}}$-$\mathbb{A}lg(0)$ in $SET$
  of the reflexive $(n,\omega)$-magmas ($n\in\mathbb{N}$), and thus gives a 
    reflexive $\omega$-magma structure on the reflexive $(n,\omega)$-magmas ($n\in\mathbb{N}$).
    
 \vspace{1cm}

\bigbreak{}
  \begin{minipage}{1.0\linewidth}
    Camell \textsc{Kachour}\\
    Macquarie University, Department of Mathematics\\
    Phone: 00612 9850 8942\\
    Email:\href{mailto:camell.kachour@gmail.com}{\url{camell.kachour@gmail.com}}
  \end{minipage}
  
\end{document}